\documentclass[de, en, amspaper, secnum, nobiblatex]{./cls/myamspaper}

\usepackage[top=40mm, bottom=40mm, left=30mm, right=30mm]{geometry}

\DeclareMathAlphabet{\mathpzc}{OT1}{pzc}{m}{it}

\usepackage{marginnote}

\newcommand{\vertiii}[1]{{\left\vert\kern-0.25ex\left\vert\kern-0.25ex\left\vert #1 
    \right\vert\kern-0.25ex\right\vert\kern-0.25ex\right\vert}}

\title[Distal systems in topological dynamics and ergodic 
stheory]{Distal systems in topological dynamics and ergodic theory}

\begin{document}
\begin{abstract}
  We generalize a result of Lindenstrauss on the interplay between 
  measurable and topological dynamics which shows that every separable ergodic measurably 
  distal dynamical system has a minimal distal model. We show that such a model can,
  in fact, be chosen completely canonically.
  The construction is performed by going through the Furstenberg--Zimmer tower of a measurably
  distal system and showing that at each step, there is a simple and canonical
  distal minimal model. This hinges on a new characterization of isometric extensions in 
  topological dynamics.\\
  \textbf{Mathematics Subject Classification (2020)}. 37A05, 37B05.
\end{abstract}

\maketitle

The famous Furstenberg structure theorem shows that every distal 
minimal topological dynamical system can be built from a trivial 
system by only successively performing \enquote{structured} (i.e., 
(pseudo-)isometric) extensions, see, e.g., \cite{Furs1963} and 
\cite[Section V.3]{deVr1993}. As shown by Zimmer 
(see \cite[Theorem 8.7]{Zimm1976}) this classical result has an 
analogue for distal systems in ergodic theory which had beed 
introduced earlier by Parry in \cite{Parr1968}: A measure-preserving 
system on a standard probability space is distal if and only if it can be 
constructed from a tower of measure-theoretically isometric extensions. The 
significance of these measurably distal systems is due to the fact 
that---by the Furstenberg-Zimmer structure theorem---any measure-preserving 
system can be recovered by taking a weakly mixing extension of a distal one 
(see, e.g., \cite{Furs1977}, \cite[Chapter 2]{Tao2009} or \cite{EHK2021}). 
This allows to reduce the proof of important results for measure-preserving 
transformations, such as Furstenberg's recurrence result used for an ergodic 
theoretic proof of Szemer\'{e}di's theorem, to the case of distal systems.

Given a minimal distal system $(K;\phi)$ and an ergodic 
$\phi$-invariant probability measure $\mu$ on $K$, it is not 
hard to prove that $(K, \mu; \phi)$ is also distal as a measure-preserving
system. While this was long known, the converse question, whether a 
distal measure-preserving system admits a distal topological model, 
was only answered much later in an 
article of Lindenstrauss (see \cite{Lindenstrauss1999} and also 
\cite[Sections 5 and 13]{GlWe2004}).
\begin{theorem*}
	Every ergodic distal measure-preserving system on a standard probability 
	space has a minimal distal metrizable topological model. 
\end{theorem*}

Topological models such as these continue to prove useful since they 
allow to use a range of topological results to derive results in 
ergodic theory. For recent examples, we refer to the proof 
of pointwise convergence for multiple averages for distal systems
(see \cite{HuangShaoXiangdong2019}), new approaches to the Furstenberg--Zimmer structure 
theorem (see \cite[Section 7.4]{EHK2021}), or a new
approach to the Host--Kra factors in \cite{GutmanLian2019}.

The proof of Lindenstrauss rests on the Mackey--Zimmer representation of 
isometric extensions as skew-products and on measure-theoretic 
considerations. In this article we pursue an operator theoretic approach to the result and 
even prove that for 
every ergodic distal system there is a canonical minimal distal topological 
model. This allows to construct such models in a functorial way: Every extension between ergodic distal systems induces a topological extension between their canonical models.

The key step is to show that an isometric extension of measure-preserving systems admits 
a completely canonical topological model that is a (pseudo)isometric extension of 
topological dynamical systems. The proof of this requires a new functional-analytic 
characterization of structured extensions in topological dynamics as 
established in \cite{EdKr2020}, related results from ergodic theory (see \cite{EHK2021}), 
as well as a result of Derrien on approximation of measurable cocycles by 
continuous ones (see \cite{Derr2000}). The 
functional-analytic view makes the parallels between the structure theory of 
topological and measure-preserving dynamical systems more apparent and leads to 
canonical models in a straightforward way. In addition, our methods allow us to generalize the result of Lindenstrauss to measure-preserving 
transformations on arbitrary probability spaces, following up on a recent 
endeavor to drop separability assumptions from classical results of ergodic 
theory (see \cite{JaTa2020} and \cite{EHK2021}).

\textbf{Organization of the article.} We start in Section 1 with the 
concepts of structured extensions of topological dynamical systems. Then, based 
on the results of \cite{EdKr2020}, we prove an operator theoretic characterization 
of (pseudo)isometric extensions in terms of the Koopman operator, see 
\cref{fapseudoisometric}. In Section 2 we consider structured extensions in 
ergodic theory, i.e., extensions with relative discrete spectrum, and recall an 
important characterization from \cite{EHK2021}, see \cref{chardiscretespectrum}. 
The major work is done in Section 3 where we construct topological models for 
structured extensions of measure-preserving systems, see \cref{mainthm}. This 
result is applied in the final section of the article to show that every ergodic 
distal measure-preserving system has a minimal distal topological model, see 
\cref{mainthmdistal}. Finally, we discuss the meaning of our result in a category theoretical sense in \cref{remcat}.

\textbf{Preliminaries and notation.} We now set up the notation and recall some 
important concepts from topological dynamics and ergodic theory. We refer to the 
monograph \cite{EFHN2015} as a general reference for the operator theoretic 
approach pursued in this article.

In the following all vector spaces are complex and all compact spaces are assumed 
to be Hausdorff. If $E$ and $F$ are Banach spaces, then $\mathscr{L}(E,F)$ denotes 
the space of all bounded linear operators from $E$ to $F$. We write 
$\mathscr{L}(E) \coloneqq \mathscr{L}(E,E)$ and $E' \coloneqq \mathscr{L}(E,\C)$. 
If $T \in \mathscr{L}(E,F)$ is a bounded operator, then $T' \in \mathscr{L}(F',E')$ 
denotes its adjoint.

 G iven a compact space $K$, we write $\mathcal{U}_K$ for the 
unique uniformity compatible with the topology of $K$. Moreover, $\mathrm{C}(K)$ 
denotes the space of all continuous complex-valued functions which is a unital 
commutative $\mathrm{C}^*$-algebra (cf. \cite[Chapter 4]{EFHN2015}). Using the 
Markov-Riesz representation theorem (see \cite[Appendix E]{EFHN2015}), we identify 
its dual space $\mathrm{C}(K)'$ with the space of all complex regular Borel 
measures on $K$. Likewise, if $\uX = (X, \Sigma, \mu)$ is a probability space, 
then we write $\mathrm{L}^p(\uX)$ with $1 \leq p \leq \infty$ for the associated 
complex $\mathrm{L}^p$-spaces and identify the dual $\mathrm{L}^1(\uX)'$ with 
$\mathrm{L}^\infty(\uX)$.

A \emph{topological dynamical system} $(K;\varphi)$ consists of a compact space 
$K$ and a homeomorphism $\varphi \colon K \rightarrow K$. It is \emph{minimal} if 
there are non-trivial closed subsets $M \subset K$ with $\varphi(M) = M$. Moreover, 
we call $(K;\varphi)$ \emph{metrizable} if the underlying compact space $K$ is 
metrizable. We refer to \cite{Ausl1988} for a general introduction to such systems. 
Topological dynamical systems can be studied effectively via operator theory by 
considering the induced \emph{Koopman operator} $T_\varphi \in \mathscr{L}(\mathrm{C}(K))$ 
defined by $T_\varphi f \coloneqq f \circ \varphi$ for $f \in \mathrm{C}(K)$, see 
\cite[Chapter 4]{EFHN2015}. We remind the reader that $T_\varphi$ is a *-automorphism of 
the $\mathrm{C}^*$-algebra $\mathrm{C}(K)$. In fact, every $*$-automorphism of 
$\mathrm{C}(K)$ is a Koopman operator associated to a uniquely determined homeomorphism 
of $K$ (see \cite[Theorem 4.13]{EFHN2015}). We write 
$\mathrm{P}_{\varphi}(K) \subset \mathrm{C}(K)'$ for the space of all invariant 
probability measures $\mu$ on $K$, i.e., $T_\varphi'\mu = \mu$. Moreover, 
$\supp \mu$ denotes the support of such a measure (see \cite[page 82]{EFHN2015}), 
and we say that $\mu$ is \emph{fully supported} if $\supp \mu = K$. Moreover, we 
write $(K,\mu)$ for the induced probability space.

Classically, a \emph{measure-preserving point transformation} is a pair 
$(\uX;\varphi)$ of a probability space $\uX = (X,\Sigma_X,\mu_X)$ and a measurable 
and measure-preserving map $\varphi \colon \uX \rightarrow \uX$ which is 
\emph{essentially invertible}, i.e., there is a map $\psi \colon \uX \rightarrow \uX$ 
such that $\psi \circ \varphi = \mathrm{id}_X = \varphi \circ \psi$ almost everywhere. 
We refer to \cite{Glas} and \cite{EiWa2011} for an introduction. Given any 
measure-preserving point transformation $(\uX;\varphi)$ we define the 
\emph{Koopman operator} on the corresponding $\mathrm{L}^1$-space via 
$T_\varphi f \coloneqq f \circ \varphi$ for $f \in \mathrm{L}^1(\uX)$. These 
operators are so-called \emph{Markov lattice isomorphisms} on the Banach lattice 
$\mathrm{L}^1(\uX)$, i.e., invertible isometries $T \in \mathscr{L}(\mathrm{L}^1(\uX))$ 
satisfying
\begin{itemize}
	\item $|Tf| = T|f|$ for every $f \in \mathrm{L}^1(\uX)$, and
	\item  $T\mathbbm{1} = \mathbbm{1}$. 
\end{itemize}
We refer to \cite[Chapter 13]{EFHN2015} for more information on such operators.
If $\uX$ is a standard probability space (see \cite[Definition 6.8]{EFHN2015}), 
then a result of von Neumann shows that every Markov lattice isomorphism 
$T \in \mathscr{L}(\mathrm{L}^1(\uX))$ is a Koopman operator of a measure-preserving 
point transformation  (see \cite[Proposition 7.19 and Theorem 7.20]{EFHN2015}). 
For a general probability space $\uX$ one can only show that such operators are 
induced by transformations of the measure algebra of $\uX$ 
(see \cite[Theorem 12.10]{EFHN2015}). Here, we avoid these measure-theoretic intricacies by defining a 
measure-preserving system in terms of operators theory. A \emph{measure-preserving system} 
is a pair $(\uX;T)$ of a probability space $\uX$ and a Markov lattice isomorphism 
$T \in \mathscr{L}(\mathrm{L}^1(\uX))$ (cf. \cite[Definition 12.18]{EFHN2015}). 
It is \emph{ergodic} if the \emph{fixed space}
\begin{align*}
	\fix(T) \coloneqq \{f \in \mathrm{L}^1(X)\mid Tf = f\}
\end{align*}
is one-dimensional (cf. \cite[Proposition 7.15]{EFHN2015}).
We say that $(\uX;T)$ is \emph{separable} if the measure space $\uX$ is separable, 
or equivalently, if the Banach space $\mathrm{L}^1(\uX)$ is separable.

\section{Structured extensions in topological dynamics}

In order to state and prove one of our main results, \cref{mainthm},
we need to briefly recap 
the notions of structured extensions in topological dynamics 
and ergodic theory together with their different characterizations. 
So we start with the notion 
of (pseudo)isometric extensions of topological dynamical systems and their functional-analytic characterization in \cref{fapseudoisometric}. 

\begin{definition}
	An \emph{extension} $q \colon (K;\varphi) \rightarrow (L;\psi)$ between topological 
	dynamical systems $(K;\varphi)$ and $(L;\psi)$ is a continuous surjection 
	$q \colon K \rightarrow L$ such that the diagram
	\begin{align*}
		\xymatrix{
			K \ar[d]_{q} \ar[r]^{\varphi} & K\ar[d]^{q}\\
			L \ar[r]_{\psi} & L
		}
	\end{align*}
  commutes. In this case, we call $(L;\psi)$ a \emph{factor} of $(K;\varphi)$.
	We write $K_l \coloneqq q^{-1}(l)$ for the \emph{fiber of $K$ over $l \in L$} and
	define the \emph{fiber product $K \times_L K$ of $K$ over $L$} as
  \begin{align*}
		K \times_L K \coloneqq \bigcup_{l \in L} K_l \times K_l \subset K \times K.
	\end{align*}
\end{definition}
\begin{remark}\label{fatopextensions}
	There is an equivalent functional-analytic perspective on extensions based on Gelfand duality: Let $(K;\varphi)$ 
	be a topological dynamical system and call a subset $M \subset \mathrm{C}(K)$ 
	\emph{invariant} if $T_\varphi M = M$. If $q \colon (K;\varphi) \rightarrow (L;\psi)$ 
	is an extension, then 
	$T_q \colon \mathrm{C}(L) \rightarrow \mathrm{C}(K), \, f \mapsto f \circ q$ is an 
	isometric *-homorphism intertwining the Koopman operators. Therefore, 
	$A_q \coloneqq T_q(\mathrm{C}(L)) \subset \mathrm{C}(K)$ is an invariant unital 
	$\mathrm{C}^*$-subalgebra of $\mathrm{C}(K)$. On the other hand, if 
	$A \subseteq \mathrm{C}(K)$ is such an invariant unital $\mathrm{C}^*$-subalgebra, 
	then $T_\varphi$ induces a homeomorphism $\psi$ on the Gelfand space $L$ of $A$ and the embedding $A \hookrightarrow \mathrm{C}(K)$ gives rise to an extension $q \colon (K;\varphi) \rightarrow (L;\psi)$ with $A = A_q$ (see  \cite[Chapter 4]{EFHN2015}).
	Thus, instead of looking at factors of a given system $(K;\varphi)$, one can also 
	examine the invariant unital $\mathrm{C}^*$-subalgebras of $\mathrm{C}(K)$.
\end{remark}

We now look at structured extensions of topological dynamical systems. There are 
basically two ways to start from the notion of an isometric or equicontinuous 
system and relativize it to extensions: One is based 
on the existence of invariant (pseudo)metrics while the other generalizes the 
concept of equicontinuity (cf. \cite[Sections V.2 and V.5]{deVr1993} and \cite[Definition 1.15]{EdKr2020}).

\begin{definition}
	An extension $q \colon (K;\varphi) \rightarrow (L;\psi)$ of topological dynamical 
	systems is called
	\begin{enumerate}[(i)]
		\item \emph{pseudoisometric} if there is a familiy $P$ of continuous mappings
  	\begin{align*}
			p \colon K \times_L K \rightarrow \R_{\geq 0}
		\end{align*}
		such that 
		\begin{itemize}
			\item $p|_{K_l \times K_l}$ is a pseudometric on $K_l$ for every $l \in L$ 
			and $p \in P$,
			\item $\{p|_{K_l \times K_l}\mid l \in L\}$ generates the topology on $K_l$ 
			for every $l \in L$, and
			\item $p(\varphi(x),\varphi(y)) = p(x,y)$ for all $(x,y) \in K\times_L K$ 
			and $p \in P$.
		\end{itemize}
		\item \emph{isometric} if it is pseudoisometric and $P$ in (i) can be chosen 
		to only have one element (which then defines a metric on every fiber).
		\item \emph{equicontinuous} if for every entourage $V \in \mathcal{U}_K$ 
		there is an entourage $U \in \mathcal{U}_K$ such that 
		for every pair $(x,y) \in K \times_L K$
		\begin{align*}
			(x,y) \in U \, \Rightarrow \, (\varphi^k(x),\varphi^k(y)) \in V 
			\textrm{ for every } k \in \Z.
		\end{align*}
	\end{enumerate}	
\end{definition}

\begin{remark}
	Every pseudoisometric extension is equicontinuous by \cite[Proposition 1.17]{EdKr2020} 
	while the converse may fail (see \cite[Example 3.15]{EdKr2020}). However, if 
	$(K;\varphi)$ (and hence also $(L;\psi)$) is minimal, then the two notions coincide 
	(see \cite[Corollary 5.10]{deVr1993}).
\end{remark}

The following is a standard example of an isometric extension.

\begin{example}[Skew rotation]\label{exampleisometricex}
	Let $\T \coloneqq \{x \in \C \mid |x| = 1\}$ and $a \in \T$. We consider $(K;\varphi)$ 
	defined by $K\coloneqq \T^2$ with $\varphi(x,y) \coloneqq (ax,xy)$ for $(x,y) \in K$, 
	and $(L;\psi)$ given by $L \coloneqq \T$ with $\psi(x) = ax$ for $x \in L$. Then the 
	projection $q \colon \T^2 \rightarrow \T$ onto the first component defines an isometric 
	extension $q\colon (K;\varphi) \rightarrow (L;\psi)$.
\end{example}

Recall that a system $(K;\varphi)$ is equicontinuous (see \cite[Chapter 2]{Ausl1988}) 
if and only if the induced Koopman operator $T_\varphi \in \mathscr{L}(\mathrm{C}(K))$ 
has discrete spectrum, i.e., $\mathrm{C}(K)$ is the closed linear hull of all eigenspaces 
of the Koopman operator (see, e.g., \cite[Proposition 1.6]{Edeko2019}). Is there a
more general version of this that can be used to characterize when an extension
$q\colon (K; \phi) \to (L; \psi)$
is pseudoisometric? If $(L; \psi)$ satisfies a mild irreducibility condition,
\cref{fapseudoisometric} below provides an affirmative answer. To state it, we 
require the definition of \emph{topological ergodicity} (in analogy to ergodicity)
and we need to recall the module structure an extension gives rise to.

\begin{definition}
	A topological dynamical system $(K;\varphi)$ is \emph{topologically ergodic} if the 
	\emph{fixed space}
	\begin{align*}
  	\fix(T_\varphi) \coloneqq \{f \in \mathrm{C}(K) \mid T_\varphi f = f\}
	\end{align*}
	of the Koopman operator $T_\varphi \in \mathscr{L}(\mathrm{C}(K))$ is one-dimensional.
\end{definition}
Every minimal system is topologically ergodic, but the class of topologically 
ergodic systems is considerably larger and contains, e.g., all topologically transitive 
systems.

\begin{remark}\label{modulerem1}
  One of the key tools in the study of extensions of dynamical systems 
  is the module structure that canonically emerges from an extension and 
  is often tacitly used. We briefly
  recall this: 
	Let $q \colon K \rightarrow L$ be a continuous surjection between compact spaces 
	and $T_q \colon \mathrm{C}(L) \rightarrow \mathrm{C}(K), \, f \mapsto f \circ q$ 
	the induced isometric $^*$-homomorphism. Via this embedding we can define a 
	multiplication
	\begin{align*}
  	\mathrm{C}(L) \times \mathrm{C}(K) \rightarrow \mathrm{C}(K), 
  	\quad (f,g) \mapsto T_qf  \cdot g
	\end{align*}
  that turns $\mathrm{C}(K)$ into a $\mathrm{C}(L)$-module in a canonical way.
\end{remark}

We now obtain the following operator theoretic charaterization of pseudoisometric 
extensions in terms of a relative notion of discrete spectrum. Recall here that a 
module $M$ over a unital commutative ring $R$ is \emph{projective} if there is another 
module $N$ over $R$ such that the direct sum $M \oplus N$ is free, i.e., has a basis.

\begin{theorem}\label{fapseudoisometric}
  Let $q \colon (K;\varphi) \rightarrow (L;\psi)$ be an extension of topological 
  dynamical systems. Assume that $(L;\psi)$ is topologically ergodic and $q$ is open.	 
	Then the following assertions are equivalent.
	\begin{enumerate}[(a)]
		\item $q$ is pseudoisometric.
		\item The union of all closed, invariant, finitely generated, projective 
		$\mathrm{C}(L)$-submodules is dense in $\mathrm{C}(K)$.
		\item The unital $\mathrm{C}^*$-algebra generated by all closed, invariant, 
		finitely generated, projective $\mathrm{C}(L)$-submodules is the whole space 
		$\mathrm{C}(K)$.
	\end{enumerate}
	If $K$ is even metrizable, then (a) can be replaced by
	\begin{enumerate}[(a')]
		\item $q$ is isometric.
	\end{enumerate}
	If $(L;\psi)$ is minimal, the assumption that $q$ is open can be dropped.
\end{theorem}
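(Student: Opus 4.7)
My plan is to exploit the Serre--Swan correspondence between finitely generated projective $\mathrm{C}(L)$-modules and finite-rank continuous Hermitian vector bundles over $L$, so that the module-theoretic condition (b) translates into the existence of many $\varphi$-invariant vector subbundles of the ``fiber bundle'' $K \to L$; this geometric picture is exactly what is encoded by a family of invariant fiber pseudometrics in (a). Throughout I would lean heavily on \cite{EdKr2020}, whose functional-analytic characterization of pseudoisometric extensions is cited in the statement and seems to provide most of the intermediate machinery (in particular, openness of $q$ and topological ergodicity of $(L;\psi)$ are the hypotheses used there to pass between local and global data).

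First I would establish the easy equivalence (b)$\Leftrightarrow$(c). Any finite sum and any product of closed, invariant, finitely generated, projective $\mathrm{C}(L)$-submodules is again of the same type: finite generation and projectivity are stable under direct sum and tensor product, invariance is clear since $T_\varphi$ is an algebra homomorphism, and the module structure from \cref{modulerem1} ensures the products land in $\mathrm{C}(K)$. Hence the unital $\mathrm{C}^*$-algebra generated by the union is just the closure of the union of all such modules, and (b)$\Leftrightarrow$(c) becomes tautological.

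Next I would prove (b)$\Rightarrow$(a). Given a closed, invariant, finitely generated, projective $\mathrm{C}(L)$-submodule $M \subset \mathrm{C}(K)$, Serre--Swan represents $M$ as continuous sections of a finite-rank vector bundle over $L$, and each fiber $M_l$ is a finite-dimensional subspace of $\mathrm{C}(K_l)$. Define
\begin{align*}
  p_M(x,y) \coloneqq \sup\bigl\{\,|f(x)-f(y)| \,\big\vert\, f \in M,\ \|f\|_\infty \leq 1\,\bigr\}, \qquad (x,y) \in K \times_L K.
\end{align*}
Finite generation ensures $p_M$ is continuous on $K \times_L K$, and $T_\varphi$-invariance of $M$ (combined with the fact that $T_\varphi$ acts as a fiberwise isometry $M_l \to M_{\psi(l)}$, which is where openness of $q$ and the ergodicity hypothesis come in to guarantee the correct transport of norms) gives $p_M(\varphi(x),\varphi(y)) = p_M(x,y)$. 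Taking $P \coloneqq \{p_M\}_M$ yields a family of invariant continuous fiber pseudometrics; density of the union of such $M$ in $\mathrm{C}(K)$ implies that $P$ separates points in every fiber and hence generates the fiber topology.

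For the converse (a)$\Rightarrow$(b), assume $q$ is pseudoisometric with family $P$. I would import from \cite{EdKr2020} the relative equicontinuity statement (pseudoisometric extensions are equicontinuous) and then, for each $f \in \mathrm{C}(K)$ and each $p \in P$, show that the orbit $\{T_\varphi^k f\}_{k\in\Z}$ is relatively compact in $\mathrm{C}(K)$ when restricted to fibers uniformly in $l$. A relative Peter--Weyl / Jacobs--de Leeuw--Glicksberg type decomposition---now applied in the $\mathrm{C}(L)$-module $\mathrm{C}(K)$ rather than in a Hilbert space---produces closed invariant $\mathrm{C}(L)$-submodules of finite rank whose union is dense. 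Openness of $q$ guarantees that the local ``finite-dimensional'' behavior on each fiber has uniform rank and can be glued to a finitely generated projective global module over $\mathrm{C}(L)$; topological ergodicity of $(L;\psi)$ is what rigidifies this rank from fiber to fiber.

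The metrizable addendum is formal: countably many pseudometrics combine to a single metric via the standard $\sum 2^{-n} p_n/(1+p_n)$ construction, and metrizability of $K$ lets us pass to a countable subfamily of $P$. The minimal case follows from \cite[Corollary 5.10]{deVr1993} cited above, which also removes the openness hypothesis on $q$.

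The main obstacle will clearly be the implication (a)$\Rightarrow$(b): in the topological setting, unlike the measure-theoretic one, there is no canonical invariant measure on fibers with which to perform Peter--Weyl, and the passage from fiberwise finite-dimensional approximation to a globally finitely generated projective $\mathrm{C}(L)$-module is exactly where openness of $q$ and topological ergodicity must be mobilized---essentially to ensure that the rank of the approximating bundle is constant on $L$ and that local sections patch continuously. I expect the cleanest route is to reduce this step to a result already proved in \cite{EdKr2020} rather than reproving such a decomposition from scratch.
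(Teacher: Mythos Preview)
The paper does not prove the equivalences (a)$\Leftrightarrow$(b)$\Leftrightarrow$(c) or the metrizable addendum at all: these are quoted as a special case of \cite[Theorem 7.2]{EdKr2020}. The only thing argued in the paper is the final sentence, that openness of $q$ may be dropped when $(L;\psi)$ is minimal, and this is precisely where your proposal has a genuine gap. You claim this ``follows from \cite[Corollary 5.10]{deVr1993} \ldots\ which also removes the openness hypothesis on $q$'', but that corollary only asserts that equicontinuous and pseudoisometric extensions coincide over a minimal base; it says nothing about openness of the extension map. The paper's argument is entirely different: it shows that when $(L;\psi)$ is minimal, each of (a) and (c) separately \emph{forces} $q$ to be open, after which the cited theorem from \cite{EdKr2020} applies. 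For (a) this is \cref{distalopen} (a distal extension over a minimal base is open, proved via the Ellis semigroup and the group structure of the fiber semigroups $E_l$), and for (c) this is \cref{isometricandopen} (condition (c) implies $q$ is equicontinuous, hence distal, hence open by \cref{distalopen} again). Your proposal supplies no mechanism for producing openness from either (a) or (c), so the final clause of the theorem is left unproved.

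A smaller issue: your (b)$\Leftrightarrow$(c) step asserts that the product inside $\mathrm{C}(K)$ of two closed finitely generated projective $\mathrm{C}(L)$-submodules is again of the same type. Projectivity is not automatic here, since the product is a \emph{quotient} of the tensor product over $\mathrm{C}(L)$ rather than the tensor product itself, and quotients of projectives need not be projective.
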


\begin{remark}
	Loosely speaking, assertions (b) and (c) of \cref{fapseudoisometric} mean that 
	$\mathrm{C}(K)$ is generated by invariant parts which are \enquote{small} relative 
	to $\mathrm{C}(L)$.
\end{remark}

We remark that, except for the last statement about minimal $(L; \psi)$, 
\cref{fapseudoisometric} is a special case of \cite[Theorem 7.2]{EdKr2020}. Thus, we only need 
to prove this additional statement. To do this, we show that, 
in case of a minimal system $(L;\psi)$,  each of the assertions (a) and (c) 
(and consequently also the stronger conditions (b) and (a')) imply that $q$ is open. 

We start by proving that (a) yields that the extension is open. In fact, this 
implication is valid for the more general class of \emph{distal} extensions 
(cf. \cite[Section 3.12]{Bron1979}).

\begin{definition}\label{defdistalext}
	An extension $q\colon (K;\varphi) \rightarrow (L;\psi)$ is \emph{distal} if 
	the following condition is satisfied:  Whenever $(x,y) \in K \times_L K$ and 
	$(\varphi^{n_\alpha})_{\alpha \in A}$ is a net   with $n_\alpha \in \Z$ for $\alpha \in A$ and   
	$\lim_\alpha \varphi^{n_\alpha}(x) = \lim_\alpha \varphi^{n_\alpha}(y)$, then $x=y$. 
\end{definition}
\begin{lemma}\label{distalopen}
	Let $q \colon (K,\varphi) \rightarrow (L,\psi)$ be a distal extension of 
	topological dynamical systems with $(L,\psi)$ minimal. Then $q$ is open.
\end{lemma}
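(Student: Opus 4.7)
The plan is to work with the Ellis semigroup $E$ of $(K;\varphi)$ together with the canonical continuous, surjective semigroup homomorphism $\pi \colon E \to E'$ onto the Ellis semigroup $E'$ of $(L;\psi)$. Everything rests on a single consequence of distality: if $u \in E$ is an idempotent and $\pi(u) \in E'$ fixes some $l \in L$, then $u$ fixes every point of the fiber $K_l$. Indeed, writing $u = \lim_\alpha \varphi^{n_\alpha}$ pointwise on $K$, one has $\varphi^{n_\alpha}(y) \to u(y)$ and $\varphi^{n_\alpha}(u(y)) \to u^2(y) = u(y)$ for each $y \in K_l$; since $(y, u(y)) \in K \times_L K$ and both components converge to $u(y)$ along $(\varphi^{n_\alpha})_\alpha$, the distality of $q$ forces $u(y) = y$.

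First, I would use this to show that every $x \in K$ lies in a minimal subsystem of $(K;\varphi)$. By general Ellis theory, for any minimal left ideal $I \subseteq E$ the set $I \cdot x$ is a minimal subset of $K$, and $x \in I \cdot x$ precisely when some idempotent of $I$ fixes $x$. To produce such an idempotent, note that the surjectivity of $\pi$ together with the minimality of $I$ makes $\pi(I)$ a minimal left ideal of $E'$. The minimality of $(L;\psi)$ then ensures that $\pi(I) \cap \{v \in E' : v(q(x)) = q(x)\}$ is a non-empty closed subsemigroup of $E'$, which by Ellis's theorem on idempotents in compact right-topological semigroups contains an idempotent $\bar u$. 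A second application of the same theorem to the non-empty closed subsemigroup $\pi^{-1}(\bar u) \cap I \subseteq E$ produces an idempotent $u \in I$ lifting $\bar u$, and the opening observation yields $u(x) = x$.

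Second, I would invoke the classical fact that a distal extension between two minimal topological dynamical systems is open (see, e.g., \cite[Section 3.12]{Bron1979}), applied to each restriction $q|_M \colon M \to L$ with $M \subseteq K$ a minimal subsystem; distality of $q|_M$ is inherited from $q$, and surjectivity follows because $q(M)$ is closed and $\psi$-invariant. Since $K$ is, by the first step, the union of its minimal subsystems, for any open $U \subseteq K$ we may write $q(U) = \bigcup_M q|_M(U \cap M)$ as a union of open subsets of $L$, proving that $q$ is open. The main obstacle is the Ellis-theoretic bookkeeping of the first step: identifying $\pi(I)$ as a minimal left ideal of $E'$ and lifting idempotents through $\pi$ while remaining inside a prescribed minimal left ideal. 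Once this is in place, the rest of the argument is a quick reduction to the classical minimal-minimal case.
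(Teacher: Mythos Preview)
Your argument is correct and takes a genuinely different route from the paper's proof. Both proofs begin with the same key use of distality---an idempotent in the Ellis semigroup that maps a fiber $K_l$ into itself must act as the identity on that fiber---but then diverge.

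The paper works directly: for each $l \in L$ it considers the compact right-topological semigroup $E_l$ of restrictions $\tau|_{K_l}$ of elements $\tau \in \mathrm{E}(K;\varphi)$ and uses the idempotent observation to conclude that $E_l$ is a group. Openness is then proved by a concrete lifting argument: given $x \in K_l$ and a net $l_\alpha \to l$, minimality of $(L;\psi)$ provides $\tau_\alpha \in \mathrm{E}(K;\varphi)$ with $q(\tau_\alpha(x)) = l_\alpha$; passing to a subnet one gets $\tau_\alpha \to \tau$ with $\tau|_{K_l} \in E_l$, and then $\tau_\alpha(\tau|_{K_l}^{-1}(x)) \to x$ gives the required lift. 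This is entirely self-contained and does not invoke the minimal-over-minimal case.

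Your approach instead first establishes a structural fact of independent interest: every point of $K$ lies in a minimal subsystem (equivalently, $K$ decomposes into minimal components, the relative analogue of the classical decomposition of distal systems). You then reduce to the known result that a distal extension between \emph{minimal} systems is open, applying it to each $q|_M \colon M \to L$. The Ellis-theoretic bookkeeping in your first step---showing $\pi(I)$ is a minimal left ideal, producing an idempotent in $\pi(I)$ fixing $q(x)$, and lifting it to an idempotent in $I$---is all sound. What you gain is a cleaner conceptual picture (decompose, then cite); what you lose is self-containment, since the paper is deliberately supplying a proof precisely because the general statement is hard to locate with proof in the literature.
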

The result is stated as a remark on page 2 of \cite{Ausl2013} without proof. 
Since we have found a proof in the literature only for the case of minimal 
$(K;\varphi)$, we provide a proof of \cref{distalopen} based on the arguments 
of \cite[Lemma 3.14.5]{Bron1979} and \cite[Theorem 10.8]{Ausl1988}. 
\begin{proof}[Proof of \cref{distalopen}]
	Consider the Ellis semigroup of the system $(K;\varphi)$ given by 
  \begin{align*}
	  \mathrm{E}(K;\varphi) \coloneqq \overline{\{\varphi^n \mid n \in  \Z \}} \subseteq K^K
  \end{align*}
	where the closure is taken with respect to the topology of pointwise convergence, see 
	\cite[Chapter 3]{Ausl1988}. For every $l \in L$ the set
	\begin{align*}
		E_l \coloneqq \{\vartheta \colon K_l \rightarrow K_l \mid 
		\exists \tau \in \mathrm{E}(K;\varphi) \textrm{ with } 
		\vartheta = \tau|_{K_l}\} \subset K_l^{K_l}
	\end{align*}
	eqipped with composition of mappings and the product topology is a compact 
	right-topological semigroup (see \cite[Section 1.3]{BeJuMi1989} or 
	\cite[Chapter 16]{EFHN2015} for this concept). 
	As a preliminary step, we show that these are actually groups. 
	To this end, we recall from the structure theory of compact right-topological
	semigroups that every such semigroup contains at least one idempotent and 
	is a group if and only if it has a unique idempotent 
	(see \cite[Theorem 2.12 and Theorem 3.11]{BeJuMi1989}).
	
	Now take 
	$l \in L$ and an idempotent $\vartheta \in E_l$, i.e, $\vartheta^2 = \vartheta$. 
	For $x \in K_l$ consider $y \coloneqq \vartheta(x) \in K_l$. Then 
	$\vartheta(y) = \vartheta^2(x) = \vartheta(x)$ which implies $x = y$ since 
	$q$ is distal. Therefore, $\mathrm{id}_{K_l}$ is the only idempotent in $E_l$ and thus $E_l$ is 
	in fact a group.
	
	We now prove that $q$ is open. Take an $x \in K$ and let $l \coloneqq q(x)$. 
	Assume that $(l_\alpha)_{\alpha \in A}$ is a net in $L$ converging to $l$. It 
	suffices to show that there is a subnet $(l_\beta)_{\beta \in B}$ of 
	$(l_\alpha)_{\alpha \in A}$ and $x_\beta \in K_{l_\beta}$ for every $\beta \in B$ 
	such that $\lim_{\beta} x_\beta = x$. We recall that, since $(L;\psi)$ is minimal,
	for every $\alpha \in A$
	\begin{align*}
		\mathrm{E}(L;\psi)(l_\alpha) = \overline{ \{\varphi^n(l_\alpha)\mid n \in \Z\}} = L.
	\end{align*}
	Moreover, 
	\begin{align*}
		\mathrm{E}(K;\varphi) \rightarrow \mathrm{E}(L;\psi), \quad 
		\tau \mapsto [q(y) \mapsto q(\tau(y))]
	\end{align*}
	is a surjective homomorphism of compact right topological semigroups by 
	\cite[Theorem 3.7]{Ausl1988}. With these two observations we find 
	$\tau_\alpha \in \mathrm{E}(K,\varphi)$ with $q(\tau_\alpha(x)) = l_\alpha$ 
	for every $\alpha \in A$. Passing to a subnet, we may assume that 
	$(\tau_\alpha)_{\alpha \in A}$ converges to some $\tau \in \mathrm{E}(K;\varphi)$. 
	Moreover, $\tau(x) = \lim_{\alpha} \tau_\alpha(x) \in K_l$ which already implies 
	$\tau(K_l) \subset K_l$ (see \cite[Lemma 3.12.10]{Bron1979}), i.e., 
	$\vartheta\coloneqq \tau|_{K_l} \in E_l$. But then 
	$x_\alpha \defeq (\tau_\alpha(\vartheta^{-1}(x)))_{\alpha \in A}$ 
	converges to $x$ and $x_\alpha \in K_{l_\alpha}$ for 
	every $\alpha \in A$.
\end{proof}
	
Since equicontinuous, and in particular pseudoisometric extensions, are distal 
(see \cite[Lemma 3.12.5]{Bron1979}), we now obtain that assertion (a) of 
\cref{fapseudoisometric} implies that the extension is open if $(L;\psi)$
is minimal. The following 
result combined with \cref{distalopen} shows that for minimal $(L;\psi)$, 
also (c) implies openness which proves \cref{fapseudoisometric}. In the proof,
we will use the canonical correspondence between Banach bundles and Banach 
modules; the reader can find a self-contained
summary of the essentials in \cite[Section 4]{EdKr2020}.

\begin{lemma}\label{isometricandopen}
	Let $q \colon (K,\varphi) \rightarrow (L,\psi)$ be an extension. Suppose that 
	the $\mathrm{C}^*$-algebra generated by all closed, invariant, finitely 
	generated, projective $\mathrm{C}(L)$-submodules is the whole space 
	$\mathrm{C}(K)$. Then $q$ is equicontinuous.
\end{lemma}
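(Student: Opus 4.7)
My plan is to exploit the canonical correspondence between finitely generated projective $\mathrm{C}(L)$-submodules of $\mathrm{C}(K)$ and finite-rank Banach bundles over $L$ recalled from \cite[Section 4]{EdKr2020}. The first step is a reduction. Let $\mathcal{E}$ denote the set of $f \in \mathrm{C}(K)$ for which the orbit $\{T_\varphi^k f\}_{k \in \Z}$ is \emph{fiberwise equicontinuous} in the sense demanded by the definition of an equicontinuous extension (i.e.\ for every $\varepsilon > 0$ there exist $g_1, \dots, g_m \in \mathrm{C}(K)$ and $\delta > 0$ so that $(x, y) \in K \times_L K$ with $\lvert g_i(x) - g_i(y) \rvert < \delta$ for every $i$ implies $\lvert (T_\varphi^k f)(x) - (T_\varphi^k f)(y) \rvert < \varepsilon$ for every $k \in \Z$). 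A routine check, using that each Koopman iterate of $f$ has the same sup norm as $f$, shows that $\mathcal{E}$ is a closed, $T_\varphi$-invariant, unital $^*$-subalgebra of $\mathrm{C}(K)$. By hypothesis (c) it therefore suffices to prove $M \subseteq \mathcal{E}$ for every closed, invariant, finitely generated, projective $\mathrm{C}(L)$-submodule $M$.

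The second step is to unpack the bundle picture of such an $M$. By the correspondence of \cite[Section 4]{EdKr2020}, $M$ is the section space of a finite-rank bundle $\mathcal{M} \to L$ whose fibers $M_l$ are naturally finite-dimensional subspaces of $\mathrm{C}(K_l)$. Since the $\mathrm{C}(L)$-module structure intertwines $T_\varphi$ with $T_\psi$, the Koopman operator descends to a bundle automorphism covering $\psi^{-1}$ whose restriction $M_l \to M_{\psi^{-1}(l)}$ is just precomposition with the bijection $\varphi \colon K_{\psi^{-1}(l)} \to K_l$; in particular, it is isometric for the sup norms. Hence $\|(T_\varphi^k f)(l)\|_{\mathrm{C}(K_l)} \leq \|f\|_\infty$ uniformly in $k \in \Z$ and $l \in L$, and since $M_l$ is finite-dimensional, the $\|f\|_\infty$-ball of $M_l$ is compact in $\mathrm{C}(K_l)$ and thus equicontinuous on the (compact) fiber $K_l$.

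The third and crucial step, and the one I expect to be the main obstacle, is to upgrade the pointwise-in-$l$ equicontinuity from the previous paragraph into equicontinuity that is \emph{uniform} in $l$. This is where projectivity carries the argument that finite generation alone would not. Picking finitely many generators $f_1, \dots, f_n$ of $M$, the local triviality of $\mathcal{M}$ together with the compactness of $L$ should furnish a constant $C > 0$ such that for every $l \in L$ and every $v \in M_l$ there is a representation $v = \sum_{i=1}^n c_i(v)\, f_i(l)$ with $\lvert c_i(v) \rvert \leq C \|v\|_{\mathrm{C}(K_l)}$. Applied to $v = (T_\varphi^k f)(l)$, this gives
\begin{align*}
	\lvert (T_\varphi^k f)(x) - (T_\varphi^k f)(y) \rvert \;\leq\; C \|f\|_\infty \sum_{i=1}^n \lvert f_i(x) - f_i(y) \rvert
\end{align*}
for every $k \in \Z$ and $(x, y) \in K_l \times K_l$, so that the finite set $f_1, \dots, f_n$ itself is a witnessing family of distinguishing functions for fiberwise equicontinuity of $\{T_\varphi^k f\}_k$; hence $f \in \mathcal{E}$.

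The technical work will reside in producing the uniform constant $C$, which I would do most cleanly either via a finite cover of $L$ by trivializing opens of $\mathcal{M}$ or, avoiding charts altogether, by working with a projective complement $M \oplus N \cong \mathrm{C}(L)^n$ and the continuous idempotent matrix over $\mathrm{C}(L)$ whose image is $M$; a uniform bound on the operator norms of the induced coordinate functionals then yields $C$ at once.
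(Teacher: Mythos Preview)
Your proposal is correct and follows essentially the same route as the paper: both arguments reduce to showing that for every closed, invariant, finitely generated, projective $\mathrm{C}(L)$-submodule $M$ one has a uniform estimate of the form $|g(x)-g(y)|\le C\sum_i |s_i(x)-s_i(y)|$ for all $g$ in the unit ball of $M$ and $(x,y)\in K\times_L K$, and both extract the constant $C$ from the local triviality of the associated finite-rank bundle together with compactness of $L$. The only cosmetic differences are that you package the reduction via the closed $^*$-subalgebra $\mathcal{E}$ while the paper phrases it through an explicit pseudometric, and that you aim for a single global generating set $f_1,\dots,f_n$ whereas the paper works with local bases $s_{n,1},\dots,s_{n,k_n}$ over a finite closed cover $L_1,\dots,L_m$; your chart option is precisely the paper's argument, and your projective-complement option is a legitimate alternative, though making the phrase ``a uniform bound on the operator norms of the induced coordinate functionals'' precise at the fiber level needs one extra line (e.g.\ that every $\mathrm{C}(L)$-linear endomorphism of $\mathrm{C}(L)^n$ is given by a matrix in $M_n(\mathrm{C}(L))$, so the idempotent and hence the section are automatically continuous and act fiberwise).
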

\begin{proof}
	The uniformity on $K$ is generated by the sets $U_{f, \varepsilon}$ that,
	for $f \in \mathrm{C}(K)$ and $\varepsilon >0$, are defined as
	\begin{align*}
		U_{f,\varepsilon} \defeq \{(x,y) \in K \times K\mid |f(x)- f(y)| < \varepsilon\}.
	\end{align*}
	Our assumption therefore yields 
	that $q$ is equicontinuous if and only if the following holds: For every 
	$\uC(L)$-submodule 
	$M \subset \mathrm{C}(K)$ that is closed, invariant, finitely generated, 
	and projective, for every $f \in M$ and every $\varepsilon > 0$, 
	we find an entourage $U \in \mathcal{U}_K$ such that
	\begin{align*}
	  \forall (x,y) \in K \times_L K\colon 
		(x,y) \in U \,  \Rightarrow \,  |f(\varphi^k(x)) -f(\varphi^k(y))|< \varepsilon 
		\quad \textrm{for all } k \in \Z
	\end{align*}
  It suffices to show the claim only 
	for $f \in M$ with $\|f\| \leq 1$. By looking at finitely many generators for 
	$M$, we will in fact be able to show that the uniformity $U$ can be chosen 
	uniformly for all $f\in M$ with $\|f\| \leq 1$. We will do so by constructing a 
	continuous pseudometric $p\colon K \times K \to \R_{\geq 0}$ from $M$ such that
	\begin{equation*}
    \forall f\in M\cap \overline{\uB_1(0)}, \, \forall x, y\in K \colon |f(x) - f(y)| \leq p(x, y).
  \end{equation*}
  The desired uniformity is then given by 
  $U = \{ (x,y)\in K\times K \mid p(x, y) < \epsilon \}$. To do this, we exploit that 
  there is a one-to-one correspondence between projective, finitely generated 
  $\uC(L)$-modules and locally trivial vector bundles over $L$:
	By 
	\cite[Theorem 8.6 and Remarks 8.7]{Gierz1982} and \cite[Example 4.5]{EdKr2020} 
	the vector spaces
	\begin{align*}
		M_l \coloneqq \{f|_{K_l} \mid l \in L\} \subset \mathrm{C}(K_l)
	\end{align*}
	for $l \in L$ define a Banach bundle over $L$ (see \cite{Gierz1982} or 
	\cite{DuGi1983} for this concept). This is locally trivial by 
	\cite[Lemma 4.13]{EdKr2020}, which---using 
	\cite[Proposition 17.2 and Corollary 4.5]{Gierz1982}---can be characterized 
	in the following way:
	\begin{itemize}
		\item There are closed subsets $L_1,\dots, L_m \subset L$ with 
		$L = \bigcup_{j=1}^m L_j$.
		\item For every $n \in \{1, \dots,m\}$ there are
		$s_{n,1}, \dots, s_{n,k_n} \in M$ such that
  	\begin{align*}
			\Phi_n \colon \mathrm{C}(L_n)^{k_n} \rightarrow M|_{q^{-1}(L)}, \quad 
			(f_1, \dots, f_{k_n}) \mapsto \sum_{j=1}^{k_n} f_j s_{n,j}|_{q^{-1}(L_n)}
		\end{align*}
		is a $\mathrm{C}(L_n)$-linear (not necessarily isometric) isomorphism between 
		the product Banach space $\mathrm{C}(L_n)^{k_n}$ with the maximum norm and the 
		subspace $M|_{q^{-1}(L_n)} \subset \mathrm{C}(q^{-1}(L_n))$.
	\end{itemize}
	For every $n \in \{1, \dots, m\}$ we now consider the continuous seminorm
	\begin{align*}
		p_n \colon K \times K \rightarrow \R_{\geq 0}, \quad 
		(x,y) \mapsto \sum_{j=1}^{k_n} |s_{n,j}(x)-s_{n,j}(y)|
	\end{align*}
	and show that there is a constant $C > 0$ such that
	\begin{align*}
		|f(x)- f(y)| \leq C \cdot \max_{n=1,\dots,m} p_n(x,y)
	\end{align*}
	for all $(x,y) \in K \times_L K$ and $f \in M$ with $\|f\| \leq 1$. 
	This will finish the proof.
			
	It suffices to show that for every $n \in \{1, \dots,m\}$ there is a constant 
	$C_n> 0$ such that the inequality
	\begin{align*}
		|f(x)- f(y)| \leq C_n  \cdot p_n(x,y)
	\end{align*}
	holds for every pair $(x,y) \in K \times_L K$ with $q(x) = q(y) \in L_n$ and 
	every $f \in M$ with $\|f\| \leq 1$. We fix $n \in \{1, \dots,m\}$ and set 
	$C_n \coloneqq \|\Phi_n^{-1}\|>0$. For  $(x,y) \in K \times_L K$ with 
	$l \coloneqq q(x) = q(y) \in L_n$ we then obtain for all $f_1, \dots, f_{k_n}
	\in \uC(L_n)$
	\begin{align*}
		\left|\Phi(f_1, \dots, f_{k_n})(x) - \Phi(f_1, \dots, f_{k_n})(y)\right| 
		&\leq  \sum_{j=1}^{k_n} |f_{j}(l)(s_{n,j}(x)-s_{n,j}(y))|\\
		&\leq  \|(f_1, \dots, f_{k_n})\| \cdot p_n(x,y).
	\end{align*}
	Since $\Phi_n$ is an isomorphism, 
	we conclude that
	\begin{align*}
		|f(x) - f(y)| \leq  C_n \cdot p_n(x,y)
	\end{align*}
  for every pair $(x,y) \in K \times_L K$ with $q(x) = q(y) \in L_n$ and every 
  $f \in M$ with $\|f\| \leq 1$. This is the desired inequality.
\end{proof}

\section{Structured extensions in ergodic theory}

We now turn to structured extensions of measure-preserving systems. In our 
operator theoretic language the following is the natural definition of an extension 
in ergodic theory. Recall here that if $\uX$ and $\uY$ are probability spaces, then 
an isometry $J \in \mathscr{L}(\mathrm{L}^1(\uY),\mathrm{L}^1(\uX))$ is a 
\emph{Markov embedding} (or \emph{Markov lattice homomorphism}) if
\begin{itemize}
	\item $|Jf| = J|f|$ for every $f \in \mathrm{L}^1(\uY)$,
	\item $J\mathbbm{1} = \mathbbm{1}$.
\end{itemize}
\begin{definition}
	An \emph{extension} (or \emph{morphism}) $J \colon (\uY;S) \rightarrow (\uX;T)$ 
	of measure-preserving systems is a Markov embedding 
	$J \in \mathscr{L}(\mathrm{L}^1(\uY), \mathrm{L}^1(\uX))$ such that the diagram
	\begin{align*}
		\xymatrix{
  		\mathrm{L}^1(\uX) \ar[r]^{T}  & \mathrm{L}^1(\uX) \\
			\mathrm{L}^1(\uY) \ar[u]^{J} \ar[r]_{S} & \mathrm{L}^1(\uY)\ar[u]_{J}
		}
	\end{align*}
	commutes. If $J$ is also bijective, then it is an \emph{isomorphism} of 
	measure-preserving systems.
\end{definition}
\begin{remark}
	Given any Markov lattice homomorphism 
	$J \in \mathscr{L}(\mathrm{L}^1(\uY),\mathrm{L}^1(\uX))$ for probability 
	spaces $\uX$ and $\uY$, the adjoint 
	$J' \in \mathscr{L}(\mathrm{L}^\infty(\uX),\mathrm{L}^\infty(\uY))$ extends 
	uniquely to a bounded positive operator 
	$\mathbb{E}_{\uY} \in \mathscr{L}(\mathrm{L}^1(\uX),\mathrm{L}^1(\uY))$ satisfying 
	$\mathbb{E}_{\uY}((Jf) \cdot g) = f \cdot \mathbb{E}_{\uY}(g)$ for all 
	$f \in \mathrm{L}^\infty(\uY)$ and $g \in \mathrm{L}^1(\uX)$  
	(see \cite[Section 13.3]{EFHN2015}). We call $\mathbb{E}_{\uY}$ the 
	\emph{conditional expectation operator} associated with $J$. If $J$ is an 
	extension of measure-preserving systems, then $\mathbb{E}_{\uY}$ intertwines 
	the dynamics.
\end{remark}
\begin{remark}\label{modulestructure2}
	As in the topological setting, we obtain a canonical module structure 
	(cf. \cref{modulerem1}). Indeed, if $f_1,f_2 \in \mathrm{L}^\infty(\uY)$, 
	then $J(f_1\cdot f_2) = J(f_1) \cdot J(f_2)$ (see \cite[Section 13.2]{EFHN2015}) 
	and this implies that the multiplication
	\begin{align*}
		\mathrm{L}^\infty(\uY) \times \mathrm{L}^1(\uX) \rightarrow \mathrm{L}^1(\uX), \quad 
		(f,g) \mapsto J(f)\cdot g
	\end{align*}
	turns $\mathrm{L}^1(\uX)$ into a module over $\mathrm{L}^\infty(\uY)$. 
	For every $p \in [1,\infty]$ the space $\mathrm{L}^p(\uX)$ is a 
	$\mathrm{L}^\infty(\uX)$-submodule of $\mathrm{L}^1(\uX)$.
\end{remark}

As in topological dynamics, there are several notions of 
\enquote{structured extensions} in ergodic theory. We use the following one, 
implicitly used by Ellis in \cite{Elli1987} and inspired by the classical 
notion of discrete spectrum for measure-preserving systems. Here, as above, a 
subset $M \subset \mathrm{L}^1(\uX)$ is \emph{invariant} if $T(M) = M$. 
\begin{definition}
	An extension $J \colon (\uY;S) \rightarrow (\uX;T)$ of measure-preserving systems 
	has \emph{relative discrete spectrum} if the union of all finitely generated 
	invariant submodules of $\mathrm{L}^\infty(\uX)$   over $\mathrm{L}^\infty(\uY)$   is dense in $\mathrm{L}^1(\uX)$.
\end{definition}

\begin{example}\label{reldiscrex}
	Consider the skew rotation $q \colon (K; \varphi) \rightarrow (L;\psi)$ of 
	\cref{exampleisometricex}. By equipping the torus $L = \T$ with the Haar 
	measure $\nu$ and its product $K = \T^2$ with the product measure $\mu = \nu \times \nu$, we arrive at an extension $(L,\nu;T_\psi) \rightarrow (K,\mu;T_\varphi)$ of measure-preserving 
	systems which has relative discrete spectrum, see \cite[Example 6.12]{EHK2021}. More	generally, homogenous skew-products are prototypical examples of extensions with 
	relative discrete spectrum (see, e.g., \cite{Zimm1976} and 
	\cite[Sections 4 and 5]{Elli1987}). 
\end{example}

Equivalent definitions of relative discrete spectrum are listed in 
\cite[Proposition 6.13]{EHK2021}. We need one using modules with an orthonormal 
basis which uses the idea that
\begin{align*}
	\mathrm{L}^\infty(\uX) \times \mathrm{L}^\infty(\uX) \rightarrow 
	\mathrm{L}^\infty(\uY), \quad (f,g) \mapsto \mathbb{E}_{\uY}(f\overline{g})
\end{align*}
can be thought of as an $\mathrm{L}^\infty(\uY)$-valued inner product. We refer to 
the article \cite{EHK2021} for a systematic approach to this idea in 
terms of Hilbert modules.

\begin{definition}\label{orthbasis}
	 Let $J \colon (\uY;S) \rightarrow (\uX;T)$ be an extension of measure-preserving 
	 systems. A finite subset $\{e_1, \dots, e_n\} \subset \uL^\infty(\uX)$ is 
	 \emph{$\uY$-orthonormal} 
	 if $\mathbb{E}_{\uY}(e_i \cdot \overline{e_j}) = 
	 \delta_{ij} \mathbbm{1} \in \mathrm{L}^\infty(\uY)$ for 
	 $i,j \in \{1, \dots,n\}$. In this case we say that $e_1, \dots, e_n$ is a 
	 \emph{$\uY$-orthonormal basis} of the  $\mathrm{L}^\infty(\uY)$ -module generated 
	 by $e_1, \dots, e_n$.
\end{definition}
\begin{remark}\label{proporth}
	If $\{e_1, \dots, e_n\} \subset \uL^\infty(\uX)$ is a 
	$\uY$-orthonormal basis of an 
	$\mathrm{L}^\infty(\uY)$-submodule $M$ as in \cref{orthbasis}, then every 
	$f \in M$ can be written as
	\begin{align*}
		f = \sum_{j=1}^n \mathbb{E}_{\uY}(f \cdot \overline{e_j}) e_j.
	\end{align*}		
	With this observation it is readily checked that any submodule 
	$M \subset \mathrm{L}^\infty(\uX)$ having a $\uY$-orthonormal basis is 
	automatically a free (and in particular, projective) module and closed in 
	$\mathrm{L}^\infty(\uX)$.
\end{remark}

The following result (see \cite[Prop. 8.5, Lem. 8.3, Lem 6.8]{EHK2021} or \cite[Remark 5.16 (1)]{Elli1987}) shows that for extensions of ergodic systems 
with relative discrete spectrum there exist \enquote{many} invariant 
finitely generated submodules with an $\uY$-orthonormal basis.
\begin{proposition}\label{chardiscretespectrum}
	Let $J \colon (\uY;S) \rightarrow (\uX;T)$ be an extension of ergodic measure-preserving systems. Then the following assertions are equivalent.
	\begin{enumerate}[(a)]
		\item $J$ has relative discrete spectrum.
		\item The union of all finitely generated invariant  $\mathrm{L}^\infty(\uY)$- submodules of 
		$\mathrm{L}^\infty(\uX)$ having a $\uY$-orthonormal basis is dense in 
		$\mathrm{L}^1(\uX)$.
	\end{enumerate}
\end{proposition}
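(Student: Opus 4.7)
The implication (b) $\Rightarrow$ (a) is immediate: any $\uY$-orthonormal basis generates a finitely generated invariant $\mathrm{L}^\infty(\uY)$-submodule, so the class appearing in (b) is a subclass of the one in the definition of relative discrete spectrum.

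For the substantive direction (a) $\Rightarrow$ (b), I would let $\mathcal{M} \subseteq \mathrm{L}^1(\uX)$ denote the $\mathrm{L}^1$-closure of the union of all finitely generated invariant $\mathrm{L}^\infty(\uY)$-submodules of $\mathrm{L}^\infty(\uX)$ that admit a $\uY$-orthonormal basis. Since $\mathcal{M}$ is a closed $T$-invariant subspace, hypothesis (a) reduces the task to showing that every finitely generated invariant $\mathrm{L}^\infty(\uY)$-submodule $N \subseteq \mathrm{L}^\infty(\uX)$ is contained in $\mathcal{M}$. The plan is to perform a measurable Gram--Schmidt procedure with respect to the $\mathrm{L}^\infty(\uY)$-valued inner product $(f,g) \mapsto \mathbb{E}_\uY(f\overline{g})$: given generators $g_1, \ldots, g_n \in \mathrm{L}^\infty(\uX)$ of $N$, form the positive Gram matrix
\begin{align*}
  G \coloneqq \bigl(\mathbb{E}_\uY(g_i \overline{g_j})\bigr)_{i,j=1}^n \in M_n(\mathrm{L}^\infty(\uY))
\end{align*}
and produce $\uY$-orthonormal $e_1, \ldots, e_r \in \mathrm{L}^\infty(\uX)$ whose $\mathrm{L}^\infty(\uY)$-span is $T$-invariant and contains, or $\mathrm{L}^1$-approximates, the generators of $N$. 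The $T$-invariance of the output module would follow from the $T$-invariance of $N$ together with the intertwining relation $\mathbb{E}_\uY \circ T = S \circ \mathbb{E}_\uY$, which transforms $G$ into $CGC^*$ for a matrix $C \in M_n(\mathrm{L}^\infty(\uY))$ encoding the action of $T$ on $N$ in the chosen generators.

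The main technical obstacle is the fiberwise degeneracy of $G$: the rank of $G$ may vary across fibers, and even where nonzero, the leading Gram minors need not have bounded inverses in $\mathrm{L}^\infty(\uY)$, so a naive orthonormalization produces output lying merely in $\mathrm{L}^2(\uX)$ rather than $\mathrm{L}^\infty(\uX)$. Ergodicity of $(\uY;S)$ handles the first issue: the rank is a $*$-congruence invariant, hence the function $y \mapsto \operatorname{rank} G(y)$ is $S$-invariant and therefore essentially constant, say equal to $r$. For the second issue, one decomposes $\uY$ into the $S$-invariant sets on which the nonzero spectrum of $G$ admits a uniform spectral gap, ergodicity again singling out one such set of full measure, or one regularizes by $G + \varepsilon I_n$ for $\varepsilon > 0$ (uniformly invertible in $M_n(\mathrm{L}^\infty(\uY))$) and passes to $\varepsilon \to 0$ in $\mathrm{L}^1$; either way the resulting orthonormalized elements are in $\mathrm{L}^\infty(\uX)$ and generate a module in $\mathcal{M}$ that $\mathrm{L}^1$-approximates $N$. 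The hardest part is to simultaneously coordinate essential boundedness, $T$-invariance, and $\mathrm{L}^1$-approximation, which is precisely the technical content of \cite[Lemmas 6.8 and 8.3]{EHK2021} underlying \cite[Proposition 8.5]{EHK2021}; since the argument is purely operator-theoretic, it goes through without any standardness hypothesis on $\uY$ or $\uX$.
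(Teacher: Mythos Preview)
The paper does not supply its own proof of this proposition; it simply cites \cite[Prop.~8.5, Lem.~8.3, Lem.~6.8]{EHK2021} and \cite[Remark 5.16(1)]{Elli1987}. Your proposal is a faithful sketch of precisely that argument from \cite{EHK2021}: the trivial direction (b) $\Rightarrow$ (a), and for (a) $\Rightarrow$ (b) a module-valued Gram--Schmidt where ergodicity of $(\uY;S)$ forces the fibrewise rank of the Gram matrix to be constant, after which one handles the lack of a uniform spectral gap by regularization or level-set decomposition. So your approach coincides with the one the paper defers to.

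One small comment: your sentence about $T$-invariance of the output module is a bit elliptical. The point is not that the Gram--Schmidt procedure itself is $T$-equivariant, but rather that the orthonormalized elements $e_1,\dots,e_r$ still generate (over $\mathrm{L}^\infty(\uY)$) the same module $N$, or at least its $\mathrm{L}^2$-closure intersected with $\mathrm{L}^\infty$, and that module was already $T$-invariant by hypothesis. The matrix $C$ you introduce is not really needed for invariance; it is the relation $Te_i = \sum_j a_{ij} e_j$ with $a_{ij} \in \mathrm{L}^\infty(\uY)$ that one must verify, and this follows because $Te_i \in N$ and the $e_j$ form an orthonormal basis of $N$ in the sense of \cref{proporth}.
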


\section{Topological models for structured extensions}

Comparing \cref{fapseudoisometric} and \cref{chardiscretespectrum} 
(combined with \cref{proporth}) makes the parallelisms between structured 
extensions in topological dynamics and ergodic theory apparent. We now 
study the relation between both worlds. Recall first that, as in 
\cref{reldiscrex}, we can always construct extensions of measure-preserving 
systems from extensions of topological dynamical systems by picking an 
invariant measure.
\begin{definition}
	Let $q \colon (K;\varphi) \rightarrow (L;\psi)$ be an extension of topological 
	dynamical systems. Moreover, let $\mu \in \mathrm{P}_\varphi(K)$ be an invariant 
	probability measure on $K$ and $q_*\mu \in \mathrm{P}_\psi(L)$ its pushforward, 
	i.e., $q_*\mu = T_q'\mu$. Then the extension
	\begin{align*}
		T_q \colon (L,q_*\mu;T_\psi) \rightarrow (K,\mu;T_\varphi), \quad f \mapsto f \circ q
	\end{align*}
	is the \emph{extension of measure-preserving systems induced by $(q,\mu)$}.
\end{definition}

With the help of \cref{fapseudoisometric} we now immediately obtain the following.
\begin{proposition}\label{inducedrelativediscr}
	Assume that $q \colon (K;\varphi) \rightarrow (L;\psi)$ is an open 
	pseudoisometric extension with a topologically ergodic system $(L;\psi)$. For every 
	$\mu \in \mathrm{P}_\varphi(K)$ the induced extension 
	$T_q \colon (L,q_*\mu;T_\psi) \rightarrow (K,\mu;T_\varphi)$ has relative 
	discrete spectrum.
\end{proposition}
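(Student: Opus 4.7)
The strategy is to use Theorem \ref{fapseudoisometric}(b) to obtain a rich supply of closed, invariant, finitely generated, projective $\mathrm{C}(L)$-submodules of $\mathrm{C}(K)$, and then transport each of them into $\mathrm{L}^\infty(K,\mu)$ to produce enough finitely generated invariant $\mathrm{L}^\infty(L,q_*\mu)$-submodules to witness relative discrete spectrum.

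Concretely, since $(L;\psi)$ is topologically ergodic and $q$ is open and pseudoisometric, Theorem \ref{fapseudoisometric} applies and in particular condition (b) holds, so the union $\mathcal{N}$ of all closed, invariant, finitely generated, projective $\mathrm{C}(L)$-submodules $N$ of $\mathrm{C}(K)$ is dense in $\mathrm{C}(K)$ with respect to the uniform norm. For each such $N$ I would pick generators $f_1,\dots,f_n \in N$ and consider the $\mathrm{L}^\infty(L,q_*\mu)$-submodule $\tilde N \subseteq \mathrm{L}^\infty(K,\mu)$ generated by the classes of $f_1,\dots,f_n$ via the pushforward of $T_q$ to $\mathrm{L}^\infty$ (recall that $T_q$ intertwines the Koopman operators, and $q_*\mu$ is chosen precisely so that pre-composition with $q$ is isometric on the appropriate $\mathrm{L}^p$-spaces). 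By construction $\tilde N$ is a finitely generated $\mathrm{L}^\infty(L,q_*\mu)$-submodule of $\mathrm{L}^\infty(K,\mu)$.

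Next I would verify that $\tilde N$ is invariant under $T_\varphi$. Because $N$ is finitely generated and projective over $\mathrm{C}(L)$, the Serre--Swan-type correspondence used in the proof of \cref{isometricandopen} implies that every element of $N$ is an honest finite $\mathrm{C}(L)$-linear combination of $f_1,\dots,f_n$ (not merely a norm-limit of such combinations). Applying this to $T_\varphi f_i \in T_\varphi N = N$ yields coefficients $g_{ij} \in \mathrm{C}(L)$ with $T_\varphi f_i = \sum_j (T_q g_{ij})\cdot f_j$ in $\mathrm{C}(K)$. Passing to $\mathrm{L}^\infty(K,\mu)$ and using that the $\mathrm{L}^\infty(L,q_*\mu)$-module action on $\mathrm{L}^\infty(K,\mu)$ extends the $\mathrm{C}(L)$-module action, the same identity holds between equivalence classes, so $T_\varphi \tilde N \subseteq \tilde N$; the reverse inclusion follows by repeating the argument for $T_{\varphi^{-1}}$ with $T_\varphi^{-1} N = N$.

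Finally, density of $\mathcal{N}$ in $\mathrm{C}(K)$ in sup norm, the continuity of the canonical embedding $\mathrm{C}(K) \hookrightarrow \mathrm{L}^1(K,\mu)$, and the density of $\mathrm{C}(K)$ in $\mathrm{L}^1(K,\mu)$ combine to show that the union of all $\tilde N$ obtained this way is dense in $\mathrm{L}^1(K,\mu)$. By definition, $T_q$ has relative discrete spectrum. The main obstacle in this plan is the algebraic-vs-topological finite generation question for $N$, i.e., ensuring that $T_\varphi f_i$ is a genuine finite $\mathrm{C}(L)$-linear combination of the generators rather than a norm-limit; once this is granted via the finitely generated projective structure (locally trivial Banach bundle picture) employed in \cref{isometricandopen}, the rest is bookkeeping.
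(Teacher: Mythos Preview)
Your proposal is correct and follows essentially the same approach as the paper's own proof: apply \cref{fapseudoisometric}(b), push each finitely generated invariant $\mathrm{C}(L)$-submodule to the $\mathrm{L}^\infty(L,q_*\mu)$-submodule generated by the images of its generators, and use density of $\mathrm{C}(K)$ in $\mathrm{L}^1(K,\mu)$. Your worry about algebraic versus topological finite generation is unnecessary---``finitely generated'' in the module-theoretic sense already means every element of $N$ is an honest finite $\mathrm{C}(L)$-combination of the chosen generators, so $T_\varphi f_i \in N$ immediately yields the coefficients $g_{ij}\in\mathrm{C}(L)$ without any appeal to Serre--Swan or the Banach bundle picture.
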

\begin{proof}
	By \cref{fapseudoisometric} the union of all closed, invariant, finitely 
	generated, projective $\mathrm{C}(L)$-submodules is dense in $\mathrm{C}(K)$ and, 
	via the canonical map $\mathrm{C}(K) \rightarrow \mathrm{L}^1(K,\mu)$, also dense in 
	$\mathrm{L}^1(K,\mu)$. However, if $M$ is a finitely generated invariant 
	$\mathrm{C}(L)$ submodule of $\mathrm{C}(K)$ with generators 
	$e_1, \dots,e_n$, then the $\mathrm{L}^\infty(L, q_*\mu )$-submodule of 
	$\mathrm{L}^\infty(K,\mu)$ generated by the canonical images of $e_1, \dots, e_n$ 
	in $\mathrm{L}^\infty(K,\mu)$ is also invariant. This shows the claim.
\end{proof}
In particular, by \cref{distalopen} we can construct extensions with relative 
discrete spectrum from pseudoisometric extensions of minimal topological dynamical 
systems. In the remainder of this section we study the converse situation. 
Given an extension $J \colon (\uY;S) \rightarrow (\uX;T)$ of measure-preserving 
systems with relative discrete spectrum, can we find a pseudosisometric 
\emph{topological model}? In order to make this question precise, we recall 
the following definition (cf. \cite[Section 2.2]{Glas} and \cite[Chapter 12]{EFHN2015}).
\begin{definition}
	Let $J_i \colon (\uY_i;S_i) \rightarrow (\uX_i;T_i)$ be extensions of 
	measure-preserving systems for $i=1,2$. An \emph{isomorphism} from 
	$J_1$ to $J_2$ is a pair $(\Psi,\Phi)$ of an isomorphism 
	$\Psi \colon (\uY_1;S_1) \rightarrow (\uY_2;S_2)$ and an isomorphism 
	$\Phi \colon (\uX_1;T_1) \rightarrow (\uX_2;T_2)$ such that the diagram
	\begin{align*}
		\xymatrix{
			\mathrm{L}^1(\uX_1) \ar[r]^{\Phi}  & \mathrm{L}^1(\uX_2) \\
			\mathrm{L}^1(\uY_1) \ar[u]^{J_1} \ar[r]_{\Psi} & \mathrm{L}^1(\uY_2)\ar[u]_{J_2}
		}
	\end{align*}
  commutes.

	If $J \colon (\uY;S) \rightarrow (\uX;T)$ is an extension of measure-preserving 
	systems, then a \emph{topological model} for $J$ is a pair $(q,\mu;\Psi,\Phi)$ 
	such that
	\begin{itemize}
		\item $q \colon (K;\varphi) \rightarrow (L;\psi)$ is an extension of topological 
		dynamical systems,
		\item $\mu \in \mathrm{P}_\varphi(K)$ is a fully supported invariant probability 
		measure, and
		\item $(\Psi,\Phi)$ is an isomorphism from the extension $T_q$ induced by 
		$(q,\mu)$ to $J$.
	\end{itemize}				
\end{definition}
\begin{remark}\label{remtopmodels}
	We use the following observation to conctruct topological models: If 
	$(q,\mu;\Psi,\Phi)$ is a topological model for $J\colon (\uY; S) \to (\uX; T)$ with 
	$q \colon (K;\varphi) \rightarrow (L;\psi)$, then 
  \begin{align*}
		A_{q,\mu} \coloneqq \Psi(\mathrm{C}(L)) \subset \mathrm{L}^\infty(\uY) 
		\textrm{ and } B_{q,\mu} \coloneqq \Phi(\mathrm{C}(K)) \subset \mathrm{L}^\infty(\uX)
	\end{align*}
	are invariant unital  $\mathrm{C}^*$-subalgebras of $\mathrm{L}^\infty(\uY)$ and 
	$\mathrm{L}^\infty(\uX)$, respectively, being dense in the corresponding 
	$\mathrm{L}^1$-spaces. Moreover, $J(A) \subset B$. Conversely, by Gelfand's  
	representation theory, for every pair $(A,B)$ of $\mathrm{L}^1$-dense invariant 
	unital $\mathrm{C}^*$-subalgebras $A \subset \mathrm{L}^\infty(\uY)$ and 
	$B \subset \mathrm{L}^\infty(\uX)$ with $J(A) \subset B$, we can conctruct, in a canonical way, a topological 
	model $(q,\mu;\Psi,\Phi)$ for $J$ such that  $A = A_{q,\mu}$ and $B = B_{q,\mu}$ , 
	cf. \cite[Chapter 12]{EFHN2015}.
\end{remark}
Using \cref{fapseudoisometric} and \cref{chardiscretespectrum} we 
always find a pseudoisometric topological model for extensions of ergodic measure-preserving systems with relative discrete spectrum.
\begin{theorem}\label{stonemodel}
	Let $J \colon (Y;S) \rightarrow (X;T)$ be an extension  of ergodic measure-persersving systems with relative discrete spectrum . Then $J$ has a topological model 
	$(q,\mu;\Psi;\Phi)$ such that $q \colon (K;\varphi) \rightarrow (L;\psi)$ is an 
	open pseudoisometric extension with   $(K;\varphi)$   topologically ergodic.
\end{theorem}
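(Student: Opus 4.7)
The plan is to construct the topological model via \cref{remtopmodels}: I exhibit invariant unital $\mathrm{C}^*$-subalgebras $A \subset \mathrm{L}^\infty(\uY)$ and $B \subset \mathrm{L}^\infty(\uX)$ with $J(A) \subset B$, each dense in the corresponding $\mathrm{L}^1$-space, and then verify that the resulting extension $q \colon (K;\varphi) \to (L;\psi)$ has all the claimed properties.

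Using \cref{chardiscretespectrum}, I fix a family $\{M_\alpha\}_{\alpha \in I}$ of finitely generated invariant $\mathrm{L}^\infty(\uY)$-submodules of $\mathrm{L}^\infty(\uX)$, each carrying a $\uY$-orthonormal basis $e_{\alpha, 1}, \dots, e_{\alpha, n_\alpha}$, whose union is dense in $\mathrm{L}^1(\uX)$. Invariance under $T$ yields cocycle coefficients $u^{(k)}_{\alpha, i, j} \coloneqq \mathbb{E}_{\uY}(T^k e_{\alpha, j} \cdot \overline{e_{\alpha, i}}) \in \mathrm{L}^\infty(\uY)$ satisfying $T^k e_{\alpha, j} = \sum_i J(u^{(k)}_{\alpha, i, j}) \, e_{\alpha, i}$; by $\uY$-orthonormality and the isometric nature of $T$, the matrices $U^{(k)}_\alpha = (u^{(k)}_{\alpha, i, j})_{i,j}$ are unitary in $M_{n_\alpha}(\mathrm{L}^\infty(\uY))$. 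I then let $A$ be an invariant unital $\mathrm{C}^*$-subalgebra of $\mathrm{L}^\infty(\uY)$ containing every $u^{(k)}_{\alpha, i, j}$ and dense in $\mathrm{L}^1(\uY)$, and let $B$ be the unital $\mathrm{C}^*$-subalgebra of $\mathrm{L}^\infty(\uX)$ generated by $J(A)$ together with all the $e_{\alpha, j}$. Then \cref{remtopmodels} delivers the desired model $(q, \mu; \Psi, \Phi)$, and $\mu$ is automatically fully supported since nonzero positive elements of $B \subset \mathrm{L}^\infty(\uX)$ have strictly positive integral.

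Verifying the properties is then mostly bookkeeping. Topological ergodicity of $(L;\psi)$ follows from $\fix(T_\psi) \cong \fix(S) \cap A \subset \fix(S) = \mathbb{C}\mathbbm{1}$, using ergodicity of $(\uY;S)$. For pseudoisometry I check condition (c) of \cref{fapseudoisometric}. For each $\alpha$ the closed $\mathrm{C}(L)$-submodule $\hat M_\alpha \subset \mathrm{C}(K)$ generated by $\Phi(e_{\alpha, 1}), \dots, \Phi(e_{\alpha, n_\alpha})$ is finitely generated and invariant — the cocycle identity, pulled back via $\Phi$, gives $T_\varphi \hat M_\alpha = \hat M_\alpha$ because all $u^{(k)}_{\alpha, i, j}$ lie in $A = \Psi^{-1}(\mathrm{C}(L))$ — and free (hence projective) over $\mathrm{C}(L)$: any relation $\sum_j f_j \Phi(e_{\alpha, j}) = 0$ pulls back to $\sum_j J(\Psi^{-1}(f_j)) e_{\alpha, j} = 0$ in $B$, and applying $\mathbb{E}_{\uY}(\,\cdot\,\overline{e_{\alpha, k}})$ forces each $f_k = 0$ by $\uY$-orthonormality. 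The union of the $\hat M_\alpha$ generates $\Phi(B) = \mathrm{C}(K)$ as a $\mathrm{C}^*$-algebra by the very definition of $B$, establishing (c).

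The main obstacle is the openness of $q$. My plan is to refine the choice of $A$ so that $(L;\psi)$ actually becomes minimal — one begins with a minimal topological model of $(\uY;S)$ (Jewett--Krieger in the separable case, with the generalizations of \cite{JaTa2020} and \cite{EHK2021} in the non-separable setting) and enlarges it via Derrien's continuous-cocycle approximation \cite{Derr2000} so that every matrix $U^{(k)}_\alpha$ becomes continuous on $L$ without destroying minimality. Once $(L;\psi)$ is minimal, the pseudoisometry of $q$ established above forces distality of $q$ (pseudoisometric extensions being equicontinuous and hence distal, \cite[Lemma~3.12.5]{Bron1979}), and \cref{distalopen} then yields openness. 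The delicate part is precisely this simultaneous arrangement: choosing $A$ large enough to contain the countably many measurable cocycle entries and still corresponding to a minimal topological dynamical system.
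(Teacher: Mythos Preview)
Your construction of $B$ and the verification of topological ergodicity and of condition (c) in \cref{fapseudoisometric} are fine, but the argument for openness has a genuine gap, and it is precisely the point you flag as ``delicate''. You propose to choose $A$ so that the resulting $(L;\psi)$ is \emph{minimal} and simultaneously contains all the measurable cocycle entries $u^{(k)}_{\alpha,i,j}$ as continuous functions. Derrien's theorem does not do this: it does not enlarge the algebra so as to absorb a given measurable cocycle, it produces a \emph{cohomologous} continuous cocycle, i.e.\ it changes the orthonormal basis $e_{\alpha,j}$ to a new basis $d_{\alpha,j}$ whose transition matrix is continuous on a fixed metrizable $L_0$. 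So you would have to go back and redefine $B$ in terms of the new bases (and handle the reduction to metrizable $L_0$ for each $\alpha$), which is essentially the content of the harder \cref{mainthm}, not of the present statement. Jewett--Krieger alone gives you a minimal model of $(\uY;S)$, but adjoining the arbitrary $\mathrm{L}^\infty$-functions $u^{(k)}_{\alpha,i,j}$ to its algebra has no reason to preserve minimality.

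The paper's proof sidesteps all of this with a single stroke: take $A = \mathrm{L}^\infty(\uY)$ (the Stone model; note the theorem's label). Then $A$ trivially contains every cocycle entry, and---this is the point you are missing---openness of $q$ is automatic by \cite[Corollary 1.9]{Elli1987}, because the Gelfand spectrum of $\mathrm{L}^\infty(\uY)$ is extremally disconnected. With openness in hand and $(L;\psi)$ topologically ergodic (from ergodicity of $(\uY;S)$), \cref{fapseudoisometric} applies directly to give pseudoisometry. No minimality, no Derrien, no Jewett--Krieger are needed at this stage; those tools only enter later, in \cref{mainthm}, where one genuinely wants to prescribe the base $(L;\psi)$.
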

\begin{proof}
	We define $A \coloneqq \mathrm{L}^\infty(\uY)$ and take $B$ as the unital 
	$\mathrm{C}^*$-algebra generated by all invariant, finitely generated, 
	projective $\mathrm{L}^\infty(\uY)$-submodules of $\mathrm{L}^\infty(\uX)$ 
	which are closed in $\mathrm{L}^\infty(\uX)$. Clearly, $A$ is dense 
	$\mathrm{L}^1(\uY)$, and, in view of \cref{proporth} and \cref{chardiscretespectrum}, 
	$B$ is also dense in $\mathrm{L}^1(\uX)$. By \cref{remtopmodels} we find a 
	topological model $(q,\mu;\Psi,\Phi)$ for $J$ where $q \colon (K;\varphi) \rightarrow (L;\psi)$ is an extension of topological dyanmical systems. We obtain that 
	$\Phi(\mathrm{C}(L)) = A = \mathrm{L}^\infty(\uY)$. Since the system   $(\uX;T)$     
	is ergodic,   $\fix(T_\varphi)$   is one-dimensional and therefore   $(K;\varphi)$   is 
	topologically ergodic. Also, since we have chosen $A$ to be the whole space 
	$\mathrm{L}^\infty(\uY)$, the induced extension $q$ is open by 
	\cite[Corollary 1.9]{Elli1987}. Finally, since $B = \Phi(\mathrm{C}(K))$ 
	we obtain that $\mathrm{C}(K)$ is generated as a unital C*-algebra by all 
	closed, invariant, finitely generated, projective $\mathrm{C}(L)$-submodules. 
	Thus, $q$ is pseudosisometric by \cref{fapseudoisometric}.
\end{proof}
Note that the construction of the topological model in the proof of \cref{stonemodel} is completely canonical. However, it 
is still unsatisfactory in some ways. For example, by relying on the 
\enquote{Stone model} (i.e., the topological model for the algebra $A = \uL^\infty(\uY)$) 
in the proof of \cref{stonemodel}, we cannot find 
metrizable models for extensions between separable probability spaces since 
$\uL^\infty(\uY)$ is only separable if it is finite-dimensional.
A yet more serious problem is that, given two extensions 
$J_1 \colon (\uZ;R) \rightarrow (\uY;S)$ 
and $J_2 \colon (\uY;S) \rightarrow (\uX;T)$ with relative discrete spectrum,
\cref{stonemodel} does not allow to construct pesudoisometric models $q_1$ 
and $q_2$ which \enquote{fit together} since in \cref{stonemodel} the base 
of the constructed extension is the Stone model. This will be essential in the 
conctruction of distal models by means of succcessive extensions. The 
following result fixes these problems, 
at least in certain situations, by allowing to impose that the topological 
model at the bottom of an extension be any specific given topological model
instead of the Stone model.

Recall that a measure 
$\nu \in \mathrm{P}_\psi(L)$ is called \emph{ergodic} if the induced 
measure-preserving system $(L,\nu;T_\psi)$ is ergodic.
\begin{theorem}\label{mainthm}
 	Let $(L;\psi)$ be a minimal topological dynamical system, 
 	$\nu \in \mathrm{P}_\psi(L)$ a fully supported ergodic measure and 
 	$J \colon (L,\nu;T_\psi) \rightarrow (\uX;T)$ an extension   of ergodic systems   with relative 
 	discrete spectrum. Then there are 
	\begin{enumerate}[(i)]
		\item an open pseudoisometric extension $q \colon (K;\varphi) \rightarrow (L;\psi)$,
		\item a fully supported   ergodic   measure $\mu \in \mathrm{P}_\varphi(K)$ 
		with $q_*\mu = \nu$, and
		\item an isomorphism $\Phi \colon (K,\mu;T_\varphi) \rightarrow (\uX;T)$, 
	\end{enumerate}
  such that $(q,\mu;\mathrm{Id},\Phi)$ is a topological model for $J$. 
  Moreover, if $\uX$ is separable and $L$ is metrizable, then $K$ can 
  be (noncanonically) chosen to be metrizable such that $q$ is an isometric
  extension.
\end{theorem}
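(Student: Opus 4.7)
The plan is to apply \cref{remtopmodels}: since the bottom of our topological model must be the given minimal system $(L;\psi)$ (rather than the Stone space used in \cref{stonemodel}), I take $A \coloneqq \mathrm{C}(L) \subset \mathrm{L}^\infty(L,\nu) = \mathrm{L}^\infty(\uY)$. The task is to construct an invariant, $\mathrm{L}^1(\uX)$-dense, unital $\mathrm{C}^*$-subalgebra $B \subset \mathrm{L}^\infty(\uX)$ containing $J(A)$ such that, in addition, $B$ is generated as a $\mathrm{C}^*$-algebra by closed, invariant, finitely generated, projective $\mathrm{C}(L)$-submodules; \cref{remtopmodels} will then produce a topological model, and \cref{fapseudoisometric} (in its minimal form, not requiring openness of $q$ a priori) will force that model to be pseudoisometric.

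The raw material is supplied by \cref{chardiscretespectrum}: finitely generated invariant $\mathrm{L}^\infty(\uY)$-submodules $M \subset \mathrm{L}^\infty(\uX)$ equipped with a $\uY$-orthonormal basis $(e_1, \dots, e_n)$ are $\mathrm{L}^1(\uX)$-dense. Invariance of $T$ together with $\uY$-orthonormality yields a measurable unitary cocycle $c = (c_{ij}) \colon L \to \mathrm{U}(n)$ satisfying $T e_j = \sum_i J(c_{ij}) e_i$. The key ingredient is now Derrien's theorem \cite{Derr2000}: because $(L;\psi)$ is minimal and $\nu$ is fully supported and ergodic, there exists a measurable map $u \colon L \to \mathrm{U}(n)$ whose cohomological conjugate of $c$ coincides with (or can be made arbitrarily close to) a continuous $\mathrm{U}(n)$-valued cocycle $\tilde c$ on $L$. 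Passing to the new basis $\tilde e_i \coloneqq \sum_j J(u_{ij}) e_j$ preserves $\uY$-orthonormality, so the $\mathrm{C}(L)$-submodule $N \coloneqq \sum_i J(\mathrm{C}(L)) \, \tilde e_i$ of $\mathrm{L}^\infty(\uX)$ is free of rank $n$ (and thus projective) by \cref{proporth}, and $T$-invariant because its transition cocycle has entries in $\mathrm{C}(L)$. Since $\mathrm{C}(L)$ is $\mathrm{L}^1(\nu)$-dense in $\mathrm{L}^\infty(\uY)$ and the $\tilde e_i$ are $\mathrm{L}^\infty$-bounded, the $\mathrm{L}^1(\uX)$-closure of $N$ recovers $M$.

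Taking $B$ to be the unital $\mathrm{C}^*$-algebra generated by all such $\mathrm{C}(L)$-submodules $N$ produces an invariant, $\mathrm{L}^1(\uX)$-dense subalgebra containing $J(\mathrm{C}(L))$ (which is realized as the $\mathrm{C}(L)$-submodule generated by the constant function $1$). By \cref{remtopmodels} the pair $(A, B)$ yields a topological model $(q, \mu; \mathrm{Id}, \Phi)$ for $J$ with $q \colon (K;\varphi) \to (L;\psi)$; the identity component of the isomorphism forces $q_*\mu = \nu$, and $\mathrm{L}^1$-density of $B$ translates into injectivity of $\mathrm{C}(K) \hookrightarrow \mathrm{L}^1(K,\mu)$, i.e.\ full support of $\mu$. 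By construction $\mathrm{C}(K)$ (via $\Phi$) is generated as a $\mathrm{C}^*$-algebra by closed, invariant, finitely generated, projective $\mathrm{C}(L)$-submodules, so \cref{fapseudoisometric} delivers pseudoisometry of $q$; pseudoisometric extensions are distal \cite[Lemma 3.12.5]{Bron1979}, so \cref{distalopen} automatically yields openness of $q$.

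For the addendum, if $\mathrm{L}^1(\uX)$ is separable and $L$ is metrizable, then by separability one can restrict the construction of $B$ to countably many modules $N$, making $B$ itself a separable $\mathrm{C}^*$-algebra whose Gelfand spectrum $K$ is metrizable; the metrizable clause (a$'$) of \cref{fapseudoisometric} upgrades the conclusion from pseudoisometric to isometric. The principal obstacle I foresee is the precise form of Derrien's approximation: if it only provides continuous cocycles approximately (rather than exactly) cohomologous to $c$, then the submodules $N$ obtained above are only approximately $T$-invariant, and one has to realize $B$ as a direct limit of algebras arising from successively finer approximations, verifying that the limit still has the desired generation by closed, invariant, finitely generated, projective $\mathrm{C}(L)$-submodules.
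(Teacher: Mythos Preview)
Your approach is essentially the paper's: both straighten the measurable $\mathrm{U}(n)$-cocycle of a $\uY$-orthonormal basis via Derrien's theorem to obtain closed, invariant, finitely generated, free $\mathrm{C}(L)$-submodules $N$, take $B$ to be the $\mathrm{C}^*$-algebra they generate, and then invoke \cref{remtopmodels} and \cref{fapseudoisometric}. Two clarifications are in order. First, your closing worry is unfounded: Derrien's result (recorded in the paper as \cref{derrien}) produces a measurable coboundary $G$ with $(G\circ\psi)\cdot F\cdot G^{-1}$ \emph{exactly} continuous and $G$ equal to the identity off a set of arbitrarily small measure, so your modules $N$ are genuinely $T$-invariant and no limiting argument is needed. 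Second, \cref{derrien} requires $L$ metrizable; the paper first reduces to this case (in the proof of \cref{derriencor}) by passing to the separable invariant $\mathrm{C}^*$-subalgebra of $\mathrm{C}(L)$ whose $\mathrm{L}^1$-closure contains the cocycle entries, a routine step you should include. The only other difference is cosmetic: the paper takes $B$ to be generated by \emph{all} closed, invariant, finitely generated, projective $\mathrm{C}(L)$-submodules (and uses your argument to prove this canonical $B$ is $\mathrm{L}^1$-dense), whereas your $B$ is a priori smaller but still suffices for the theorem as stated.
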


The challenge in proving \cref{mainthm} is, given an abundance of finitely 
generated invariant $\uL^\infty(L, \nu)$-submodules, to find an abundance 
of finitely generated invariant $\uC(L)$-submodules. It is nontrivial
that this can be done and it is this and only this point that forces us to 
restrict to $\Z$-actions in this article.  The following lemma shows that 
at least for $\Z$-actions, finitely generated invariant $\uL^\infty(L, \nu)$-submodules 
can indeed be approximated by finitely generated invariant $\uC(L)$-submodules.

\begin{lemma}\label{derriencor}
	Let $(L;\psi)$ be a topological dynamical system, $\nu \in \mathrm{P}_\psi(L)$ 
	fully supported and ergodic, and $J \colon (L,\nu;T_\psi) \rightarrow (\uX;T)$ 
	an extension. Let $M \subset \mathrm{L}^\infty(\uX)$ be an invariant 
	$\mathrm{L}^\infty(L,\nu)$-submodule with orthonormal basis 
	$\{e_1, \dots, e_n \}$. For every $\varepsilon > 0$ there is an 
	$(L,\nu)$-orthonormal set $\{d_1, \dots, d_n \} \subset M$ such that 
  \begin{enumerate}[(i)]
		\item the $\mathrm{C}(L)$-submodule generated by $d_1, \dots, d_n$ is 
		invariant (as well as closed in $\uL^\infty(\uX)$ and projective) and
		\item $\|d_i - e_i\|_{\mathrm{L}^1(\uX)} \leq \varepsilon$ for all 
		$i \in \{1, \dots,n\}$.
	\end{enumerate}
\end{lemma}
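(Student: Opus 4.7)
The plan is to translate the $T$-invariance of $M$ into a measurable unitary matrix cocycle over $(L;\psi)$, reduce this cocycle to a continuous one using Derrien's approximation result \cite{Derr2000}, and exploit the residual gauge freedom to keep the new basis $\mathrm{L}^1(\uX)$-close to the original one.

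Concretely, since $M$ is $T$-invariant and $\{e_1,\dots,e_n\}$ is $\uY$-orthonormal with $\uY=(L,\nu)$, one uniquely writes $Te_i = \sum_j J(a_{ij})\, e_j$ with $a_{ij}\in\mathrm{L}^\infty(L,\nu)$ via the reconstruction formula from \cref{proporth}. Applying $\mathbb{E}_\uY$ to $Te_i\cdot\overline{Te_k}$ and using that $\mathbb{E}_\uY$ intertwines $T$ with $T_\psi$ (together with $T$ restricting to a $\ast$-automorphism of $\mathrm{L}^\infty(\uX)$), the $\uY$-orthonormality of $\{e_i\}$ transfers to $\{Te_i\}$ and forces the matrix $A(l):=(a_{ij}(l))$ to lie in $U(n)$ for $\nu$-almost every $l\in L$.

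By Derrien's result \cite{Derr2000}, which applies because $\nu$ is ergodic and fully supported, the measurable cocycle $A$ is cohomologous to a continuous one: there exist measurable $h\colon L\to U(n)$ and continuous $g\colon L\to U(n)$ with $(h\circ\psi)\,A\,h^{-1}=g$. For any continuous $R\colon L\to U(n)$, the product $P:=Rh$ is another measurable gauge conjugating $A$ to the continuous cocycle $(R\circ\psi)\,g\,R^{-1}$. Because continuous $U(n)$-valued maps are $\mathrm{L}^1$-dense in measurable ones (a Lusin-type statement for the compact Polish group $U(n)$), we may select $R$ so close to $h^{-1}$ in $\mathrm{L}^1(L,\nu)$ that, using pointwise unitarity of $R$, $\|P-I\|_{\mathrm{L}^1(L,\nu)} = \|R(h-R^{-1})\|_{\mathrm{L}^1(L,\nu)} \leq \|h-R^{-1}\|_{\mathrm{L}^1(L,\nu)}$ becomes arbitrarily small.

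Now set $d_i := \sum_j J(p_{ij})\,e_j \in M$, where $P=(p_{ij})$. Pointwise unitarity of $P$ combined with $\uY$-orthonormality of $\{e_j\}$ yields $\mathbb{E}_\uY(d_i\overline{d_k}) = (PP^\ast)_{ik}\,\mathbbm{1} = \delta_{ik}\,\mathbbm{1}$, so $\{d_1,\dots,d_n\}$ is $\uY$-orthonormal; the elementary bound $\|d_i-e_i\|_{\mathrm{L}^1(\uX)} \leq \sum_j \|p_{ij}-\delta_{ij}\|_{\mathrm{L}^1(L,\nu)}\,\|e_j\|_\infty$ makes (ii) hold once $R$ is chosen finely enough. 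The matrix of $T$ in the new basis is $(P\circ\psi)\,A\,P^{-1} = (R\circ\psi)\,g\,R^{-1}$, all of whose entries lie in $\mathrm{C}(L)$, so the $\mathrm{C}(L)$-submodule $N$ generated by $\{d_i\}$ is $T$-invariant. Since $\mathrm{C}(L)$ sits as a closed subspace of $\mathrm{L}^\infty(L,\nu)$ (using full support of $\nu$), the $\mathrm{C}(L)$-linear map $f\mapsto(\mathbb{E}_\uY(f\overline{d_j}))_j$ is an isomorphism $N\to\mathrm{C}(L)^n$, making $N$ both closed in $\mathrm{L}^\infty(\uX)$ and free, in particular projective.

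The main obstacle---and the reason the argument is tied to $\mathbb{Z}$-actions---is the invocation of Derrien's approximation theorem, which is the only genuinely nontrivial external input. The remaining ingredients (the matrix coordinatization, the pointwise unitary algebra, and the Lusin-type density of continuous $U(n)$-valued maps) are routine once this step is in place.
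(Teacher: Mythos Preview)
Your overall strategy coincides with the paper's---encode $T$ on the basis as a measurable $\mathrm{U}(n)$-cocycle, apply Derrien to make it continuous, and read off the $d_i$ from the gauge---and your verifications of orthonormality, invariance, and freeness of the resulting $\mathrm{C}(L)$-module are fine. There is, however, a real gap: you invoke Derrien's theorem for $(L;\psi)$ directly, but Derrien's result (see \cref{derrien} and \cite{Derr2000}) requires the underlying system to be \emph{metrizable}, which is not among the hypotheses. The paper deals with this explicitly: the coefficients $f_{ij}=\mathbb{E}_{(L,\nu)}(Te_i\cdot\overline{e_j})$ generate a separable invariant unital $\mathrm{C}^*$-subalgebra of $\mathrm{L}^\infty(L,\nu)$, and one then finds a separable invariant $\mathrm{C}^*$-subalgebra $A\subset\mathrm{C}(L)$ whose $\mathrm{L}^1$-closure contains these coefficients; passing to the metrizable factor associated with $A$ preserves invariance of the module generated by $e_1,\dots,e_n$ and puts one in the setting where \cref{derrien} applies. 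Without this reduction your appeal to Derrien is unjustified, and the same metrizability issue undercuts your subsequent Lusin-type density of $\mathrm{C}(L,\mathrm{U}(n))$ in the measurable $\mathrm{U}(n)$-valued maps.

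A smaller difference concerns how the closeness in (ii) is obtained. The paper does not use a separate Lusin step; it uses the refined form of Derrien recorded in \cref{derrien}, namely that the transfer function $G$ may already be chosen with $\nu(\{G\neq\mathrm{Id}\})$ arbitrarily small, which immediately gives $\|d_i-e_i\|_{\mathrm{L}^1}\leq\varepsilon$. Your alternative---post-composing with a continuous $R$ approximating $h^{-1}$---also works once $L$ is metrizable, but you should say how the approximant is kept $\mathrm{U}(n)$-valued: Lusin plus Tietze only produces a nearby $M_n(\C)$-valued continuous map, and one still has to project back to $\mathrm{U}(n)$, e.g.\ via the polar part, and check this does not spoil the $\mathrm{L}^1$-estimate.
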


The proof rests on the following approximation result which is, in essence, due to 
Derrien (see \cite{Derr2000}). It shows that, given a measurable map with values 
in the compact group $\mathrm{U}(n)$ of unitary $n\times n$-matrices,
one can find an arbitrarily close continuous map that is cohomologous  (cf. \cite[Theorem 3.1]{Lindenstrauss1999}) .

\begin{lemma}\label{derrien}
	Let $(L;\psi)$ be a a metrizable topological dynamical system and 
	$\mu \in \mathrm{P}_{\psi}(L)$ fully supported and ergodic. Assume that 
	$F \colon L \rightarrow \mathrm{U}(n)$ is a Borel measurable map. For every 
	$\varepsilon>0$ there is a Borel measurable map $G \colon L \rightarrow \mathrm{U}(n)$ 
	and a continuous map $H \colon L \rightarrow \mathrm{U}(n)$ such that
	\begin{enumerate}[(i)]
		\item $\nu(\{l \in L \mid G(l) \neq \mathrm{Id}\}) \leq \varepsilon$, and
		\item $(G \circ \psi) \cdot F \cdot G^{-1} =H$ almost everywhere.
	\end{enumerate}
\end{lemma}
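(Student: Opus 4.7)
The plan is to use Lusin's theorem to approximate $F$ by a continuous map agreeing with $F$ off a small exceptional set, and then invoke a Kakutani--Rokhlin tower argument to produce the measurable cohomology $G$ that reconciles $F$ with a suitable continuous $H$. First, since $\mathrm{U}(n)\subseteq\mathbb{C}^{n\times n}$ is a compact smooth submanifold admitting a continuous retraction $\pi$ from a tubular neighborhood (for instance, the nearest-point projection, or the unitary factor in polar decomposition when the image of the extension lies in $\mathrm{GL}(n,\mathbb{C})$), Lusin's theorem combined with Tietze extension and post-composition with $\pi$ yields, for any $\delta > 0$, a closed set $C\subseteq L$ with $\nu(L\setminus C) < \delta$ and a continuous map $H_{0}\colon L\to \mathrm{U}(n)$ satisfying $F|_{C} = H_{0}|_{C}$. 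The parameter $\delta$ will be chosen small in terms of $\varepsilon$.

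Next, the identity $(G\circ\psi)\cdot F\cdot G^{-1} = H$ rearranges to $G\circ\psi = H\cdot G\cdot F^{-1}$, which allows forward propagation of $G$ along $\psi$-orbits once initial values have been prescribed. We may assume $(L,\nu;T_{\psi})$ is aperiodic, as the periodic case reduces to a finite computation on a single orbit. By the Kakutani--Rokhlin lemma, for any $N\in\mathbb{N}$ and any $\delta' > 0$ there is a measurable base $B\subseteq L$ such that $B, \psi B, \ldots, \psi^{N-1} B$ are pairwise disjoint with total measure at least $1-\delta'$. Prescribing $G$ on a suitable cross-section and propagating via the recursion then produces a measurable $G$ satisfying the cohomology equation outside a set of arbitrarily small measure, while $H$ is continuous by construction.

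The hard part will be balancing two competing objectives: one needs both $\nu(\{G\neq \mathrm{Id}\}) \leq \varepsilon$ and the cohomology equation to hold almost everywhere. Naive forward propagation from the base of a single Rokhlin tower fails, because as soon as the orbit first meets $L\setminus C$ the recursion drives $G$ away from $\mathrm{Id}$ and keeps it there throughout the remainder of the tower, producing a bad set whose measure grows with tower height rather than staying of size $\delta$. The resolution, which is the technical heart of Derrien's argument in \cite{Derr2000} and relies essentially on the $\mathbb{Z}$-action structure through first-return times, is to further refine $H_{0}$ to a nearby continuous map $H$ so that the cocycle products $H(\psi^{m-1}l)\cdots H(l)$ match $F(\psi^{m-1}l)\cdots F(l)$ over each first-return segment to $C$; after this adjustment, setting $G = \mathrm{Id}$ on $C$ and propagating forces the recursion to collapse back to $G = \mathrm{Id}$ at every return to $C$, so that $\{G \neq \mathrm{Id}\}\subseteq L\setminus C$ has measure $< \delta \leq \varepsilon$. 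Combining this with the measurability of $G$ and continuity of $H$ completes the proof.
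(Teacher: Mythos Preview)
Your approach is essentially the same as the paper's: both rely on Derrien's theorem from \cite{Derr2000}. The paper's proof is terser---it directly cites \cite[Theorem~1.1]{Derr2000} for the non-atomic (equivalently, aperiodic) case, noting that the remarks after \cite[Theorem~1.2]{Derr2000} guarantee condition~(i), and dispatches the atomic case by observing that $L$ is then a finite discrete space, so $F$ is already continuous and one may take $G=\mathrm{Id}$, $H=F$. Your sketch supplies useful context by outlining the Lusin/Rokhlin-tower mechanism behind Derrien's argument, but since you likewise defer the decisive refinement step (adjusting $H_0$ to a continuous $H$ whose cocycle products match those of $F$ over first-return segments) to \cite{Derr2000}, your argument is no more self-contained than the paper's. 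One small point: in the periodic case you should make explicit that ergodicity together with full support forces $L$ to be finite, hence discrete, so that continuity of $F$ is automatic---the phrase ``a finite computation on a single orbit'' does not by itself explain why $H$ can be taken continuous.
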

\begin{proof}
	If $ \nu $ has no atoms, then \cite[Theorem 1.1]{Derr2000} shows the existence of 
	a Borel measurable map $G \colon L \rightarrow \mathrm{U}(n)$ satisfying (ii). 
	However, an inspection of the proof (see the remarks after \cite[Theorem 1.2]{Derr2000}) 
	reveals that for a given $\varepsilon >0$ one can also ensure (i).

	Now assume that $\nu$ has an atom. Then there is a periodic finite orbit of measure 
	one and therefore, since $\nu$ is fully supported, $L$ is a discrete finite space. 
	In particular, every (Borel measurable) map $G \colon L \rightarrow \mathrm{U}(n)$ 
	is continuous and there is nothing to prove.
\end{proof}

\begin{proof}[Proof of \cref{derriencor}]
	As a first step, we reduce the problem to the case of a 
	metrizable space $L$. So let $M \subset \uL^\infty(\uX)$ be a 
	finitely generated invariant $\uL^\infty(L, \nu)$-submodule with 
	orthonormal basis $e_1, \dots, e_n$. Then
 	\begin{align*}
		Te_i = \sum_{j=1}^n  f_{ij} e_j \quad \textrm{for every } i \in \{1, \dots n\},
	\end{align*}
	where $f_{ij} \defeq \mathbb{E}_{(L,\nu)}(T e_i \cdot \overline{e_j}) \in 
	\mathrm{L}^\infty(L,\nu)$ for $i,j \in \{1, \dots, n\}$ (see \cref{proporth}).
	Let $B \subset \mathrm{L}^\infty(L,\nu)$ be the unital invariant C*-subalgebra 
	generated by the coefficients $f_{ij}$ for $i,j \in \{1, \dots n\}$. Then $B$ is 
	separable. Since $\mathrm{C}(L)$ is dense in $\mathrm{L}^1(L,\nu)$, we find a 
	separable invariant C*-subalgebra $A$ of $\mathrm{C}(L)$, the $\mathrm{L}^1$-closure 
	of which contains $B$. The canonical inclusion map $A  \hookrightarrow \mathrm{C}(L)$ 
	induces an extension $q \colon (L;\psi) \rightarrow (M;\vartheta)$ 
	(see \cref{fatopextensions}), i.e., $T_q(\mathrm{C}(M)) = A$. Since $A$ is separable, 
	the space $M$ is metrizable (see \cite[Theorem 4.7]{EFHN2015}). We equip $M$ with 
	the pushforward measure $q_*\nu$. Then $T_q$ defines an extension 
	$T_q \colon ((M,q_*\nu);T_\vartheta) \rightarrow 
	((L,\nu);T_\psi)$. In particular, we obtain a new extension 
	$JT_q \colon ((M,q_*\nu);T_\vartheta) \rightarrow (\uX;T)$ 
	and $\mathrm{L}^1(\uX)$ is thus a $\mathrm{L}^\infty(M,q_*\nu)$-module 
	(cf. \cref{modulestructure2}). By choice of $A$, the 
	$\mathrm{L}^\infty(M, q_*\nu)$-submodule generated by $e_1, \dots, e_n$ is still 
	invariant. Replacing $(L;\psi)$ by $(M;\vartheta)$ and $\nu$ by $q_*\nu$ we may 
	therefore assume that $L$ is metrizable.

	Next, we show that we can pick representatives for the coefficients
	$f_{ij} \in \uL^\infty(L, \nu)$ which define a $\mathrm{U}(n)$-valued function. To that end, note that
	\begin{align*}
	  \sum_{k=1}^n f_{ik} \overline{f_{jk}}
	  = \mathbb{E}_{(L,\nu)}(T e_i \cdot T \overline{e_j})
		= T_\psi \mathbb{E}_{(L,\nu)}(e_i \overline{e_j}) 
		= T_\psi \delta_{ij} \mathbbm{1} 
    = \delta_{ij} \mathbbm{1}.
  \end{align*}
 	Picking suitable representatives for 
 	$f_{ij} \in \mathrm{L}^\infty(\uY)$ for $i,j \in \{1,\dots,n\}$, which we 
 	denote by the same symbol, we therefore obtain a Borel measurable map
	\begin{align*}
  	F \colon L \mapsto \mathrm{U}(n), \quad l \mapsto (f_{ij}(l))_{i,j}
	\end{align*}
	from $L$ to $\mathrm{U}(n)$.
	For $\varepsilon > 0$ set 
	\begin{align*}
		\delta \coloneqq \varepsilon \cdot \left[2n \cdot 
		\max \left\{\|e_i\|_{\mathrm{L}^\infty(\uX)} \mid 
		i \in \{1, \dots,n\}\right\}\right]^{-1}.
	\end{align*}
	Apply \cref{derrien} to find a Borel measurable map 
	\begin{align*}
		G \colon L \rightarrow \mathrm{U}(n), \quad l \mapsto (g_{ij}(l))_{i,j}
	\end{align*}
	and a continuous map 
	\begin{align*}
		H \colon L \rightarrow \mathrm{U}(n), \quad l \mapsto (h_{ij}(l))_{i,j}
	\end{align*}
	such that
	\begin{enumerate}[(i)]
		\item $\nu(\{l \in L \mid G(l) \neq \mathrm{Id}\}) \leq \delta$, and
		\item $(G \circ \psi) \cdot F =H \cdot G$ almost everywhere.
	\end{enumerate}
	Now consider the elements 
	$d_i \coloneqq \sum_{k=1}^n g_{ik} e_k \in M\subset \mathrm{L}^\infty(\uX)$ for 
	$i \in \{1, \dots, n\}$. Since 
	\begin{align*}
	  Td_i 
	  &=  \sum_{k=1}^n (T_\psi g_{ik}) \cdot T e_k 
	  = \sum_{k=1}^n \sum_{j=1}^n  (T_\psi g_{ik}) \cdot f_{kj} \cdot e_j \\
		&=  \sum_{j=1}^n \left(\sum_{k=1}^n  (T_\psi g_{ik}) \cdot f_{kj}\right) e_j 
		= \sum_{j=1}^n \left(\sum_{k=1}^n h_{ik}g_{kj}\right)e_j 
		= \sum_{k=1}^n h_{ik} d_k
	\end{align*}
	for every $i \in \{1, \dots n\}$, the $\mathrm{C}(L)$-submodule of 
	$\mathrm{L}^\infty(\uX)$ generated by $d_1, \dots, d_n$ is invariant. 
	Moreover, a swift computation confirms thats
	$\mathbb{E}_{(L,\nu)}(d_i\overline{d_j}) = \delta_{i,j}\mathbbm{1}$ 
	for $i,j \in \{1, \dots, n\}$ which shows that $\{d_1, \dots, d_n\}$ is an 
	$(L,\nu)$-orthonormal set. In particular, the $\uC(L)$-submodule 
	generated by $d_1, \dots, d_n$ is free and hence closed in $\uL^\infty(\uX)$
	and projective. Finally, since 
	$\nu(\{l \in L \mid G(l) \neq \mathrm{Id}\}) \leq \delta$ 
	we obtain for $i \in \{1, \dots, n\}$
	\begin{align*}
		\|d_i - e_i\|_{\mathrm{L}^1(\uX)}
		&= \left\| \mathbb{E}_{(L, \nu)}(d_i - e_i)\right\|_{\mathrm{L}^1(L, \nu)} 
		= \left\|\sum_{k=1}^n g_{ik} \mathbb{E}_{(L,\nu)}e_k - 
		\mathbb{E}_{(L,\nu)}e_i\right\|_{\mathrm{L}^1(L,\nu)}\\
		&\leq \delta \cdot \left\| \sum_{k=1}^n g_{ik} \mathbb{E}_{(L,\nu)}e_k - 
    \mathbb{E}_{(L,\nu)}e_i\right\|_{\mathrm{L}^\infty(L,\nu)}\\
		&\leq \delta \cdot \left\| \sum_{k=1}^n g_{ik} e_k - e_i\right\|_{\mathrm{L}^\infty(\uX)} 
		\leq \varepsilon.
	\end{align*}
\end{proof}
We can now prove \cref{mainthm}.
\begin{proof}[Proof of \cref{mainthm}]
	Let $B$ be the unital $\mathrm{C}^*$-subalgebra generated by all closed, invariant, 
	finitely generated, projective $\mathrm{C}(L)$-submodules of $\mathrm{L}^\infty(\uX)$. 
	We show that $B$ is dense in $\mathrm{L}^1(\uX)$. Since $J$ has relative discrete 
	spectrum, it suffices to approximate elements $f$ contained in a finitely generated 
	$\mathrm{L}^\infty(L, \nu)$-submodule of $\mathrm{L}^\infty(\uX)$ with an orthonormal 
	basis (see \cref{chardiscretespectrum}). Take an orthonormal basis 
	$\{e_1, \dots, e_n\}$ of such a module $M$. Let $f = \sum_{i=1}^n f_i e_i \in M$ 
	for $f_1, \dots, f_n \in \mathrm{L}^\infty(L, \nu)$ and $\varepsilon > 0$. Set 
	$c_1 \coloneqq \sum_{i=1}^n \|f_i\|_{\mathrm{L}^\infty(L, \nu)} +1 > 0$. Using 
	\cref{derriencor} we find an $(L,\nu)$-orthonormal set $\{d_1, \dots, d_n\} \subset M$ 
	such that its $\mathrm{C}(L)$-linear hull $N$ is invariant and 
  \begin{align*}
		\|d_i - e_i\|_{\uL^1(\uX)} \leq \frac{\varepsilon}{2c_1}
	\end{align*}
	for all $i \in \{1, \dots, n\}$.	Set $c_2 \coloneqq \sum_{i=1}^n \|d_i\|_{\mathrm{L}^\infty(\uX)} +1 > 0$. 
	Since $\mathrm{C}(L)$ is dense in $\mathrm{L}^1(L,\nu)$, we now also find 
	$g_1, \dots, g_n \in \uC(L)$ such that 
	\begin{align*}
		\|f_i - g_i\|_{\uL^1(L, \nu)} \leq \frac{\varepsilon}{2c_2}.
	\end{align*}
	For $g \coloneqq \sum_{i=1}^ng_i d_i \in N$ we then obtain
	\begin{align*}
		\left\|f - g\right\|_{\mathrm{L}^1(\uX)} 
		&\leq \sum_{i=1}^n \|f_i\|_{\mathrm{L}^\infty(L, \nu)} \|e_i - d_i\|_{\mathrm{L}^1(\uX)} 
		+ \sum_{i=1}^n \|f_i - g_i\|_{\mathrm{L}^1(L, \nu)} \|d_i\|_{\mathrm{L}^\infty(\uX)}\\
		&\leq \varepsilon.
	\end{align*}
	Since $N$ is a closed, invariant, finitely generated, projective 
	$\mathrm{C}(L)$-submodule of $\mathrm{L}^\infty(\uX)$ (use \cref{proporth}), 
	this shows that $B$ is dense in $\mathrm{L}^1(\uX)$. By Gelfand theory we now 
	find an extension $q \colon (K;\varphi)\rightarrow (L;\psi)$, an ergodic measure 
	$\mu \in \mathrm{P}_{\phi}(K)$ with $q_*\mu = \nu$ and an isomorphism 
	$\Phi \colon (K,\mu;T_\varphi) \rightarrow (\uX;T)$ with $\Phi(\mathrm{C}(L)) = B$ 
	such that $(q,\mu;\mathrm{Id},\Phi)$ is a topological model for $J$ (see \cref{remtopmodels}). By definition 
	of $B$ and \cref{fapseudoisometric}, the extension $q$ is pseudoisometric. 
				 
	Finally, assume that $\uX$ is separable and $L$ is metrizable. Then we find a 
	sequence $(M_n)_{n \in \N}$ of finitely generated $\mathrm{L}^\infty(L, \nu)$-submodules 
	of $\mathrm{L}^\infty(\uX)$ with an orthonormal basis such that their union is dense 
	in $\mathrm{L}^1(\uX)$. For every $n \in \N$ we find a sequence $(N_{n,k})_{k \in \N}$ 
	of  closed, invariant, finitely generated, projective $\mathrm{C}(L)$-submodules 
	contained in $M_n$ the union of which is dense in $M_n$ with respect to the 
	$\mathrm{L}^1$-norm. Let $B$ the unital $\mathrm{C}^*$-subalgebra of 
	$\mathrm{L}^\infty(\uX)$ generated by $\{N_{n,k}\mid n,k \in \N\}$. 
	Since $\mathrm{C}(L)$ is separable (see \cite[Theorem 4.7]{EFHN2015}), 
	$N_{n,k}$ is separable for all $n,k \in \N$. Therefore $B$ is separable. 
	Proceeding as above yields a pesudoisometric extension 
	$q \colon (K;\varphi)\rightarrow (L;\psi)$, an ergodic measure 
	$\mu \in \mathrm{P}_{\phi}(K)$ with $q_*\mu = \nu$ and an isomorphism 
	$\Phi \colon (K,\mu;T_\varphi) \rightarrow (\uX;T)$ with $\Phi(\mathrm{C}(L)) = B$ 
	such that $(q,\mu;\mathrm{Id},\Phi)$ is a topological model for $J$. Again using 
	\cite[Theorem 4.7]{EFHN2015} we conclude that $K$ is metrizable and therefore 
	$q$ is isometric (see \cref{fapseudoisometric}).
\end{proof}

\section{Topological models for distal systems}

With the help of \cref{mainthm}, we now prove the existence of minimal distal 
topological models for ergodic distal measure-preserving systems. Recall, 
that a topological dynamical system $(K;\varphi)$ is \emph{distal} if the 
extension $q \colon (K;\varphi) \rightarrow (\{\mathrm{pt}\};\mathrm{id})$ 
over a one-point system is distal in the sense of \cref{defdistalext}, i.e., 
if the following condition is satisfied: Whenever $(x,y) \in K \times K$ and 
$(\varphi^{n_\alpha})_{\alpha \in A}$ is a net with 
$\lim_\alpha \varphi^{n_\alpha}(x) = \lim_\alpha \varphi^{n_\alpha}(y)$, then $x=y$. 
	
A typical example of a distal system is the skew torus discussed in 
\cref{exampleisometricex} which is given by an isometric extension of an 
isometric system. Put differently, it can be built from a trivial system 
by performing two isometric extensions. Furstenberg's structure theorem 
extends this observation stating that, in fact, any minimal distal system 
can be built up from a trivial system via successive (pseudo)isometric 
extensions and \emph{projective limits}. We recall the latter concept 
(see also \cite[Section (E.12)]{deVr1993}).
\begin{definition}\label{projective}
	Let $I$ be a directed set. For every $i \in I$ let $(K_i;\varphi_i)$ 
	be a topological dynamical system and for $i \leq j$ let 
	$q_i^j \colon (K_j;\varphi_j) \rightarrow (K_i;\varphi)$ be an extension. 
	Assume that
	\begin{enumerate}[(i)]
  	\item $q_i^j\circ q^k_j = q_i^k$ for all $i \leq j \leq k$, and
		\item $q_i^i = \mathrm{id}_{K_i}$ for every $i \in I$.
	\end{enumerate}
  Then the pair $((K_i;\varphi_i))_{i \in I}, (q_i^j)_{i \leq j})$ is a 
  \emph{projective system}.
		
	A topological dynamical system $(K;\varphi)$ together with extensions 
	$q_i \colon (K;\varphi) \rightarrow (K_i;\varphi_i)$ for every $i \in I$ 
	such that $q_i =  q^j_i \circ q_j$ for all $i \leq j$ is a 
	\emph{projective limit of $((K_i;\varphi_i))_{i \in I}, (q_i^j)_{i \leq j})$} 
	if it satisfies the following universal property:
	\begin{itemize}
		\item Whenever $(\tilde{K};\tilde{\phi})$ is a topological dynamical system and 
		$p_i \colon (\tilde{K};\tilde{\phi}) \rightarrow (K_i;\varphi_i)$ are extensions 
		for every $i \in I$ 
		such that $p_i =  q^j_i \circ p_j$ for all $i \leq j$, then there is a 
		unique extension $q \colon (\tilde{K};\tilde{\phi}) \rightarrow (K;\phi)$ such that 
		the diagram
		\begin{align*}
			\xymatrix{
				(K;\varphi) \ar[d]_{q_i}  & (\tilde{K};\tilde{\phi}) \ar[l]_q \ar[ld]^{p_i}\\
				(K_i;\varphi_i)
			}
		\end{align*}
		commutes for every $i \in I$.
	\end{itemize}
	In this case, we write
	\begin{align*}
 		(K;\varphi) = \lim_{\substack{\longleftarrow\\ i}} (K_i;\varphi_i).
 	\end{align*}
\end{definition}
\begin{remark}\label{remproj}
	Every projective system $((K_i;\varphi_i))_{i \in I}, (q_i^j)_{i \leq j})$ 
	has a projective limit and it is unique up to isomorphy. In fact, we 
	obtain a concrete construction of a projective limit by considering the 
	dynamics on the compact space
	\begin{align*}
		\left\{(x_i)_{i \in I} \in 
		\prod_{i \in I} K_i \mmid \pi_i^j(x_j) = x_i \textrm{ for all } i \leq j\right\}
	\end{align*}
	induced by the product action on $\prod_{i \in I} K_i$ 
	(see \cite[Exercise 2.18]{EFHN2015}). Moreover, if $(K_i;\varphi_i)$ is 
	minimal for every $i \in I$, then also every projective limit of 
	$((K_i;\varphi_i))_{i \in I}, (q_i^j)_{i \leq j})$ is minimal 
	(see \cite[Exercise 3.19]{EFHN2015}).
\end{remark}
\begin{remark}\label{opprojective}
	The following is an operator theoretic view of projective limits. 
	Suppose that $((K_i;\varphi_i))_{i \in I}, (q_i^j)_{i \leq j})$ is 
	a projective system and $(K;\varphi)$ together with extensions 
	$q_i \colon (K;\varphi) \rightarrow (K_i;\varphi_i)$ for $i \in I$ 
	is a projective limit. Then the corresponding invariant unital 
	$\mathrm{C}^*$-subalgebras $A_{i} \coloneqq T_{q_i}(\mathrm{C}(K_i))$ 
	for $i \in I$ (see \cref{fatopextensions}) satisfy
	\begin{enumerate}[(i)]
		\item $A_{i} \subset A_{j}$ for $i \leq j$, and 
		\item the union
		\begin{align*}
			\bigcup_{i \in I} A_i
		\end{align*}
		is dense in $\mathrm{C}(K)$,
	\end{enumerate}						 
  see \cite[Exercise 4.16]{EFHN2015}. Conversely, assume that 
  $(A_i)_{i \in I}$ is a net of invariant unital 
  $\mathrm{C}^*$-subalgebras $A_{i} \subset \mathrm{C}(K)$ for 
  $i \in I$ satisfying (i) and (ii). For every $i \in I$ we then 
  find an extension $q_i \colon (K;\varphi) \rightarrow (K_i;\varphi_i)$ 
  such that $A_{i} = T_{q_i}(\mathrm{C}(K_i))$ (see \cref{fatopextensions}) 
  and the canonical inclusion maps $A_{i} \hookrightarrow A_{j}$ for 
  $i \leq j$ induce extensions 
  $q_{i}^j \colon (K_j;\varphi_j) \rightarrow (K_i;\varphi_i)$ between 
  the associated systems. A moment's thought reveals that $(K;\varphi)$ 
  is a projective limit of the projective system 
  $((K_i;\varphi_i))_{i \in I}, (q_i^j)_{i \leq j})$. 
\end{remark}
The observations discussed in \cref{opprojective} are helpful for showing 
that a system is a projective limit of certain factors with specific 
properties. We demonstrate this 
by proving the following lemma which will be important soon.
\begin{lemma}\label{metrizablefact}
	Let $(K;\varphi)$ be a topological dynamical system. Then there are
	\begin{enumerate}[(i)]
  	\item an inductive system $((K_i;\varphi_i))_{i \in I}, (q_i^j)_{i \leq j})$ 
  	of metrizable systems, and
		\item extensions $q_i \colon (K;\varphi) \rightarrow (K_i;\varphi_i)$ for 
		every $i \in I$,
	\end{enumerate}
	such that $(K;\varphi)$ together with the extensions $q_i$ for $i \in I$ is a 
	projective limit of $((K_i;\varphi_i))_{i \in I}, (q_i^j)_{i \leq j})$.
\end{lemma}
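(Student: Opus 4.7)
The plan is to use the operator-theoretic reformulation of projective limits provided in \cref{opprojective}, together with the fact (cited from \cite[Theorem 4.7]{EFHN2015}) that a compact space $K$ is metrizable if and only if $\mathrm{C}(K)$ is separable. The strategy is to build the required projective system from the separable invariant unital $\mathrm{C}^*$-subalgebras of $\mathrm{C}(K)$.

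More precisely, I would let $I$ be the collection of all separable, unital, $T_\varphi$-invariant $\mathrm{C}^*$-subalgebras $A \subset \mathrm{C}(K)$, partially ordered by inclusion. First, I would verify that $I$ is directed: given $A_1, A_2 \in I$ with countable dense subsets $D_1, D_2$, the unital $\mathrm{C}^*$-algebra generated by $D_1 \cup D_2$ is both separable (being generated by a countable set) and invariant (since $T_\varphi$ is a $^*$-automorphism mapping $D_1 \cup D_2$ into $A_1 \cup A_2$), and it contains both $A_1$ and $A_2$.

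Next, I would show that $\bigcup_{A \in I} A$ is dense in $\mathrm{C}(K)$: for any $f \in \mathrm{C}(K)$, the unital $\mathrm{C}^*$-algebra generated by the countable set $\{T_\varphi^n f \mid n \in \Z\}$ is separable and $T_\varphi$-invariant, hence lies in $I$, and contains $f$. With both properties established, \cref{opprojective} (respectively \cref{fatopextensions}) yields for every $A \in I$ a factor $q_A \colon (K;\varphi) \rightarrow (K_A;\varphi_A)$ with $T_{q_A}(\mathrm{C}(K_A)) = A$, and for $A \subset A'$ a factor map $q_A^{A'} \colon (K_{A'};\varphi_{A'}) \rightarrow (K_A;\varphi_A)$, so that $((K_A;\varphi_A))_{A \in I}, (q_A^{A'})_{A \subset A'})$ forms a projective system of which $(K;\varphi)$ is, by \cref{opprojective}, a projective limit with the canonical maps $q_A$.

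The only remaining point is to observe that each factor $(K_A;\varphi_A)$ is metrizable. This follows immediately from \cite[Theorem 4.7]{EFHN2015}: separability of $A = T_{q_A}(\mathrm{C}(K_A)) \cong \mathrm{C}(K_A)$ is equivalent to metrizability of $K_A$. There is no genuine obstacle in this argument; the only mild subtlety is bookkeeping---choosing the index set to be the separable invariant subalgebras rather than, say, finitely generated ones, in order to guarantee directedness while retaining metrizability of the quotients through the Gelfand correspondence.
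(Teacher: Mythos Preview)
Your proof is correct and follows essentially the same approach as the paper: both arguments use \cref{opprojective} together with the characterization of metrizability via separability of $\mathrm{C}(K)$ from \cite[Theorem 4.7]{EFHN2015}. The only difference is cosmetic: the paper indexes by finite subsets $i \subset \mathrm{C}(K)$ (ordered by inclusion) and lets $A_i$ be the invariant unital $\mathrm{C}^*$-subalgebra generated by $i$, whereas you index directly by the separable invariant unital $\mathrm{C}^*$-subalgebras themselves. Your closing remark that finitely generated subalgebras would cause trouble with directedness is unfounded---the paper's indexing by finite subsets is trivially directed---but this does not affect the validity of your argument.
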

\begin{proof}
	Let $I$ be the family of finite subsets of $\mathrm{C}(K)$ ordered by set 
	inclusion. For every $i \in I$ let $A_i$ be the invariant unital 
	$\mathrm{C}^*$-subalgebra generated by $i$. Then $A_i$ is separable for 
	every $i \in I$ and the net $(A_i)_{i \in I}$ satisfies properties (i) 
	and (ii) of \cref{opprojective}. By \cref{opprojective} we therefore find 
	a projective system $((K_i;\varphi_i))_{i \in I}, (q_i^j)_{i \leq j})$ and 
	extensions $q_i \colon (K;\varphi) \rightarrow (K_i;\varphi_i)$ for $i \in I$ 
	such that $(K;\varphi)$ is a projective limit of  
	$((K_i;\varphi_i))_{i \in I}, (q_i^j)_{i \leq j})$ and 
	$T_{q_i}(\mathrm{C}(K_i)) = A_i$ for every $i \in I$. Since $A_i$ is separable, 
	$K_i$ is metrizable for every $i \in I$ (see \cite[Theorem 4.7]{EFHN2015}), 
	which proves the claim.
\end{proof}
Let us now recall the famous Furstenberg structure theorem for minimal distal 
systems (see \cite[Chapter 7]{Ausl1988} and \cite[Section V.3]{deVr1993}).
\begin{theorem}\label{furstenberg}
	For a minimal system $(K;\varphi)$ the following assertions are equivalent.
	\begin{enumerate}[(a)]
		\item The system $(K;\varphi)$ is distal.
		\item There is an ordinal $\eta_0$ and a projective system 
		$((K_\eta;\varphi_\eta))_{\eta \leq \eta_0}, (q_\eta^\sigma)_{\eta \leq \sigma})$ 
		such that
	  \begin{enumerate}[(i)]
	  	\item $(K_1;\varphi_1)$ is a trivial system $(\{\mathrm{pt}\};\mathrm{id})$,
	  	\item $q_{\eta}^{\eta+1}$ is pseudoisometric for every $\eta < \eta_0$,
	  	\item $(K_\eta;\varphi_\eta) = \lim_{\gamma < \eta} (K_\gamma;\varphi_\gamma)$ 
	  	for every limit ordinal $\eta \leq \eta_0$.
	  \end{enumerate}
	\end{enumerate}
\end{theorem}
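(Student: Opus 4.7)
The plan is to prove both implications, with (b) $\Rightarrow$ (a) being a routine transfinite induction and (a) $\Rightarrow$ (b) being the substantial direction. For (b) $\Rightarrow$ (a), I would show by transfinite induction on $\eta \leq \eta_0$ that every $(K_\eta;\varphi_\eta)$ is distal. The base case $\eta = 1$ is trivial. At a successor step, pseudoisometric extensions are equicontinuous and hence distal by \cite[Lemma 3.12.5]{Bron1979}, and the composition of distal extensions is distal, so $(K_{\eta+1};\varphi_{\eta+1}) \to (\{\mathrm{pt}\};\mathrm{id})$ remains distal. At a limit ordinal, if $(x,y) \in K_\eta \times K_\eta$ admits a net $(\varphi_\eta^{n_\alpha})$ with $\lim_\alpha \varphi_\eta^{n_\alpha}(x) = \lim_\alpha \varphi_\eta^{n_\alpha}(y)$, then applying each projection $q_\gamma^\eta$ and using distality of $(K_\gamma;\varphi_\gamma)$ yields $q_\gamma^\eta(x) = q_\gamma^\eta(y)$ for all $\gamma < \eta$; the concrete description of the projective limit in \cref{remproj} then forces $x = y$.

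For (a) $\Rightarrow$ (b), the strategy is transfinite recursion: starting from the trivial factor, successively enlarge by maximal pseudoisometric extensions and take projective limits at limit ordinals, until $(K;\varphi)$ is exhausted. Concretely, using the correspondence between factors and invariant unital $\mathrm{C}^*$-subalgebras $A \subset \mathrm{C}(K)$ from \cref{fatopextensions}, I would build a chain $(A_\eta)$ with $A_1 = \mathbb{C}\mathbbm{1}$, with $A_{\eta+1}$ corresponding to a nontrivial pseudoisometric extension of the system associated to $A_\eta$ whenever $A_\eta \neq \mathrm{C}(K)$, and with $A_\eta = \overline{\bigcup_{\gamma < \eta} A_\gamma}$ at limit ordinals (which corresponds to the projective limit via \cref{opprojective}). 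The chain is strictly increasing, so by a cardinality argument it must stabilize at some ordinal $\eta_0$ with $A_{\eta_0} = \mathrm{C}(K)$, yielding $(K_{\eta_0};\varphi_{\eta_0}) = (K;\varphi)$.

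The hard part, and the real heart of Furstenberg's theorem, is the successor step: showing that whenever $q\colon (K;\varphi) \to (L;\psi)$ is a non-isomorphic extension of minimal systems with $(K;\varphi)$ distal, there is a nontrivial pseudoisometric intermediate factor $(M;\vartheta)$ strictly between $(L;\psi)$ and $(K;\varphi)$. The classical route is to define the relative regionally proximal relation $\mathrm{RP}(q) \subset K \times_L K$ and to quotient $K$ by the smallest closed invariant equivalence relation containing it; this produces the maximal pseudoisometric (equivalently, under minimality, equicontinuous) intermediate extension. The delicate point is \emph{nontriviality}, which is precisely where distality is used: by the Ellis-semigroup argument from the proof of \cref{distalopen}, $\mathrm{E}(K;\varphi)$ acts by homeomorphisms on each fiber of $q$, and combined with minimality this can be used to show that $\mathrm{RP}(q)$ is a proper subset of $K \times_L K$. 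Equivalently, in the operator-theoretic language of \cref{fapseudoisometric}, one needs to produce at least one nontrivial closed invariant finitely generated projective $\mathrm{C}(L)$-submodule of $\mathrm{C}(K)$ that is not already contained in $\mathrm{C}(L)$; this is the only genuinely nontrivial input, and it is what I expect to be the main obstacle in any approach.

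Once this key lemma is available, termination is automatic: any strictly increasing chain of $\mathrm{C}^*$-subalgebras of $\mathrm{C}(K)$ indexed by ordinals has length bounded by the cardinality of $\mathrm{C}(K)$, so the recursion halts at a least ordinal $\eta_0$ with $A_{\eta_0} = \mathrm{C}(K)$. The resulting projective system $((K_\eta;\varphi_\eta))_{\eta \leq \eta_0}$ with the constructed bonding maps then satisfies (i)--(iii) by construction.
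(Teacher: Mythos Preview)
The paper does not prove \cref{furstenberg}; it merely recalls the statement with references to \cite[Chapter 7]{Ausl1988} and \cite[Section V.3]{deVr1993}. Your outline is the classical approach found in those sources: the direction (b) $\Rightarrow$ (a) via transfinite induction on the tower is exactly as you describe, and for (a) $\Rightarrow$ (b) the standard argument is the transfinite construction of the maximal equicontinuous (equivalently, for minimal systems, pseudoisometric) intermediate factor at each successor step, with the relative regionally proximal relation supplying the quotient and distality guaranteeing nontriviality. You have correctly identified the one genuinely deep ingredient---that a non-isomorphic extension of minimal systems with distal total space admits a nontrivial equicontinuous intermediate factor---and correctly located where the Ellis-group structure enters. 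So there is nothing to compare against in the paper itself, and your sketch matches the cited literature.
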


One can take part (b) of \cref{furstenberg} as an inspiration for the concept of 
measurably distal systems. To formulate this concept, we briefly recall the notion of 
inductive limits for measure-preserving systems (see \cite[Section 13.5]{EFHN2015}).

\begin{definition}
	Let $I$ be a directed set. For every $i \in I$, let $(\uX_i;T_i)$ be a 
	measure-preserving system and for $i \leq j$ let 
	$J_i^j \colon (\uX_i;T_i) \rightarrow (\uX_j;T_j)$ be an extension. 
	Suppose that 
	\begin{enumerate}[(i)]
		\item $J_j^k J_i^j = J_i^k$ for $i \leq j \leq k$, and
		\item $J_i^i = \mathrm{Id}$ for every $i \in I$.  
	\end{enumerate}
	Then the pair $(((\uX_i;T_i))_{i \in I}, (J_i^j)_{i \leq j})$ is an 
	\emph{inductive system}. 

  A measure-preserving system $(\uX;T)$ together with extensions 
  $J_i \colon (\uX_i;T_i) \to (\uX;T)$ such that $J_i = J_j J_i^j$ for 
  $i \leq j$ is an \emph{inductive limit of 
  $(((\uX_i;T_i))_{i \in I}, (J_i^j)_{i \leq j})$ } if it satisfies the 
  following universal property:
	\begin{itemize}
		\item Whenever $(\uY;S)$ is a measure-preserving system and 
		$I_i \colon (\uX_i;T_i) \to 
    (\uY;S)$ are extensions with $I_i =  I_j J_i^j$ for $i \leq j$, 
    then there is a unique extension $J 
    \colon (\uX;T) \to (\uY;S)$ such that the diagram
	\begin{align*}
		\xymatrix{
			& (\uY;S)\\
			(\uX_i;T_i)  \ar[ru]^{I_i} \ar[r]_{J_i}  & (\uX;T)\ar[u]_{J}  \\		
		}
  \end{align*}
		commutes for every $i$.
	\end{itemize}
	We then write 
	\begin{align*}
		(\uX;T) = \lim_{\substack{\longrightarrow\\ i}} (\uX_i;T_i).
	\end{align*}
\end{definition}
Every inductive system has an inductive limit 
(see \cite[Theorem 13.38]{EFHN2015}) and it is unique up to isomorphy.  
We now recall the definition of distal systems used by Furstenberg 
\cite[Definition 8.3]{Furs1977}.
\begin{definition}\label{defmeasurablydistal}
	A measure-preserving system $(\uX;T)$ is \emph{distal} if there is an 
	ordinal $\eta_0$ and an inductive system 
  $(((X_\eta;T_\eta))_{\eta \leq \eta_0}, (J_\eta^\sigma)_{\eta \leq \sigma})$ 
  such that
	\begin{enumerate}[(i)]
		\item $(\uX_1;T_1)$ is a trivial system $(\{\mathrm{pt}\};\mathrm{Id})$,
		\item $J_\eta^{\eta+1}$ has relatively discrete spectrum for every $\eta < \eta_0$,
		\item $(X_\eta;T_\eta) = \lim_{\mu < \eta} (X_\mu;T_\mu)$ for every limit 
		ordinal $\mu \leq \eta_0$.
	\end{enumerate}
\end{definition}
\begin{remark}
	If $\uX$ is a standard probability space, then there is an equivalent definition 
	in terms of so-called \emph{separating sieves}, see \cite{Parr1968} and 
	\cite[Theorem 8.7]{Zimm1976}.
\end{remark}
The measure-preserving system given by the skew-torus (see \cref{reldiscrex}) is a 
standard example for a distal measure-preservings system. By definition, it is 
obtained by equipping a topologically distal system with an invariant probability 
measure. Our main result, generalizing \cite[Theorem 4.4]{Lindenstrauss1999}, 
shows that, up to an isomorphism, every ergodic distal system can be obtained in 
this way. Moreover, the proof reveals a canonical choice for such a minimal distal model of a given distal ergodic measure-preserving system.

\begin{theorem}\label{mainthmdistal}
	Let $(\uX;T)$ be an ergodic distal measure-preserving system. Then there are a  minimal distal topological dynamical system $(K;\varphi)$ and a 
	fully supported 
	ergodic measure $\mu \in \mathrm{P}_{\varphi}(K)$ such that $(\uX;T)$ is isomorphic to $(K,\mu;T_\varphi)$. If $\uX$ is separable, 
	then $K$ can be (noncanonically) chosen to be metrizable.
\end{theorem}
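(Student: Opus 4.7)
The plan is to proceed by transfinite induction along the Furstenberg--Zimmer tower witnessing that $(\uX; T) = (\uX_{\eta_0}; T_{\eta_0})$ is distal. At each stage $\eta \leq \eta_0$ I will build a minimal distal topological system $(K_\eta; \varphi_\eta)$, a fully supported ergodic measure $\mu_\eta \in \mathrm{P}_{\varphi_\eta}(K_\eta)$, and an isomorphism $\Phi_\eta \colon (K_\eta, \mu_\eta; T_{\varphi_\eta}) \rightarrow (\uX_\eta; T_\eta)$, together with compatible topological extensions $q_\eta^\sigma \colon (K_\sigma; \varphi_\sigma) \rightarrow (K_\eta; \varphi_\eta)$ for $\eta \leq \sigma$ that intertwine the $J_\eta^\sigma$. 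Each $(\uX_\eta; T_\eta)$ is ergodic as a factor of $(\uX; T)$, so the hypotheses of \cref{mainthm} remain available throughout. The base step $\eta = 1$ takes the trivial one-point system with its Dirac measure.

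For the successor step from $\eta$ to $\eta + 1$, I will apply \cref{mainthm} to the extension $J_\eta^{\eta+1} \circ \Phi_\eta \colon (K_\eta, \mu_\eta; T_{\varphi_\eta}) \rightarrow (\uX_{\eta+1}; T_{\eta+1})$, which has relative discrete spectrum, over the minimal topological base $(K_\eta; \varphi_\eta)$ carrying the fully supported ergodic measure $\mu_\eta$. This directly yields the open pseudoisometric extension $q_\eta^{\eta+1}$, a fully supported measure $\mu_{\eta+1}$ with $(q_\eta^{\eta+1})_* \mu_{\eta+1} = \mu_\eta$, and the isomorphism $\Phi_{\eta+1}$; ergodicity of $\mu_{\eta+1}$ is transported via $\Phi_{\eta+1}$ from $(\uX_{\eta+1}; T_{\eta+1})$. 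Since pseudoisometric extensions are distal and $(K_\eta; \varphi_\eta)$ is distal by induction, the system $(K_{\eta+1}; \varphi_{\eta+1})$ is distal too. Minimality then follows because in a distal system every orbit closure is minimal, and the pointwise ergodic theorem applied to the ergodic fully supported $\mu_{\eta+1}$ produces a point with dense orbit.

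At a limit ordinal $\eta \leq \eta_0$, I will take $(K_\eta; \varphi_\eta)$ to be the projective limit of the previously constructed $(K_\gamma; \varphi_\gamma)_{\gamma < \eta}$. This is minimal by \cref{remproj}, and distal because any proximal pair in the limit projects to a proximal pair at every lower stage. The coherent family $(\mu_\gamma)_{\gamma < \eta}$ extends uniquely to an invariant probability measure $\mu_\eta$ on $K_\eta$ via the standard projective-limit construction, and $\mu_\eta$ is automatically fully supported by minimality of $K_\eta$. By \cref{opprojective}, the pulled-back subalgebras $T_{q_\gamma^\eta}(\mathrm{C}(K_\gamma))$ are dense in $\mathrm{C}(K_\eta)$ and hence in $\mathrm{L}^1(K_\eta, \mu_\eta)$; this exhibits $(K_\eta, \mu_\eta; T_{\varphi_\eta})$ as the inductive limit of the $(K_\gamma, \mu_\gamma; T_{\varphi_\gamma})$, and the universal property then supplies the required isomorphism $\Phi_\eta$ to $(\uX_\eta; T_\eta)$. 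Setting $(K; \varphi) \coloneqq (K_{\eta_0}; \varphi_{\eta_0})$, $\mu \coloneqq \mu_{\eta_0}$, and $\Phi \coloneqq \Phi_{\eta_0}$ yields the desired model.

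For the metrizable addendum, when $\uX$ is separable I will first pass to a cofinal subtower of countable length using separability of $\mathrm{L}^1(\uX)$; the metrizable variant of \cref{mainthm} then preserves metrizability at each successor step, and at each countable limit ordinal the projective limit of a countable family of metrizable compacta is again metrizable. The genuine work is concentrated entirely in \cref{mainthm}; the remaining difficulty is bookkeeping at limit ordinals, specifically matching the topological projective limit equipped with its projective-limit measure against the measure-theoretic inductive limit, which the density statement in \cref{opprojective} resolves.
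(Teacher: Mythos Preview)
Your overall strategy matches the paper's proof exactly: transfinite induction along the Furstenberg--Zimmer tower, invoking \cref{mainthm} at successor ordinals and taking projective limits at limit ordinals. The bookkeeping at limit stages is also essentially the same.

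There is, however, one genuine gap. At the successor step you justify minimality of $(K_{\eta+1};\varphi_{\eta+1})$ by saying that the pointwise ergodic theorem applied to the fully supported ergodic measure $\mu_{\eta+1}$ produces a point with dense orbit. This argument is valid only when $K_{\eta+1}$ is metrizable: one needs a \emph{countable} dense family in $\mathrm{C}(K_{\eta+1})$ to conclude from Birkhoff's theorem that a generic point exists, and in the general (non-separable) setting there is no reason why the uncountably many minimal components of a distal system could not all carry measure zero. The paper isolates precisely this point as \cref{minimaldistal} and proves it by first writing an arbitrary $(K;\varphi)$ as a projective limit of metrizable factors (\cref{metrizablefact}), applying your argument to each metrizable factor, and then using that a projective limit of minimal systems is minimal. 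You should either invoke \cref{minimaldistal} directly or reproduce this reduction; without it, the successor step is incomplete in the non-metrizable case that the theorem explicitly covers.

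A minor remark on the separable addendum: your plan to pass once and for all to a countable cofinal subtower is a slight reorganization of what the paper does (the paper extracts a countable subsequence locally at each limit ordinal), but both work, since separability of $\mathrm{L}^1(\uX)$ guarantees that countably many stages already have dense union, and compositions of extensions with relative discrete spectrum again have relative discrete spectrum.
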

The following lemma (cf. the proof of \cite[Theorem 4.4]{Lindenstrauss1999}) is the last missing ingredient for the proof of \cref{mainthmdistal}. 
\begin{lemma}\label{minimaldistal}
	If $(K;\varphi)$ is a distal topological dynamical system and there is a fully supported 
	ergodic measure $\mu \in \mathrm{P}_\varphi(K)$, then $(K;\varphi)$ is minimal.
\end{lemma}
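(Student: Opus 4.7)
The plan is to reduce to the case where $K$ is metrizable and then produce a $\mu$-generic point that exhibits $K$ as the closure of a single forward orbit. By \cref{metrizablefact} I would write $(K;\varphi) = \lim_i (K_i;\varphi_i)$ as a projective limit of metrizable factors via extensions $q_i\colon (K;\varphi) \to (K_i;\varphi_i)$. To descend the hypotheses, I would verify two points: first, a factor of a distal system is distal (the Ellis semigroup of $(K_i;\varphi_i)$ is a quotient of that of $(K;\varphi)$, and the image of a group under a semigroup homomorphism is again a group); second, $\mu_i \coloneqq (q_i)_\ast \mu$ is ergodic and fully supported on $K_i$ (ergodicity by pulling back invariant measurable sets through $q_i$, and full support from the continuity and surjectivity of $q_i$). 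Granted the metrizable case, each $(K_i;\varphi_i)$ is minimal, and then $(K;\varphi)$ is minimal by \cref{remproj}.

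In the metrizable case, $\mathrm{C}(K)$ is separable and Birkhoff's pointwise ergodic theorem---applied to a countable norm-dense subset of $\mathrm{C}(K)$ and combined with the ergodicity of $\mu$---produces a $\mu$-generic point $x \in K$, in the sense that the empirical measures $\frac{1}{N}\sum_{n=0}^{N-1}\delta_{\varphi^n x}$ converge weak-$\ast$ to $\mu$. Since each empirical measure is supported on the forward orbit of $x$, the support of the weak-$\ast$ limit satisfies $\supp \mu \subseteq \overline{\{\varphi^n x : n \geq 0\}}$. Combined with $\supp \mu = K$, this forces the forward orbit closure of $x$ to equal $K$. But in a distal system every point is almost periodic, so its forward orbit closure is a minimal set; hence $K$ is minimal.

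The only real obstacle is the possible nonexistence of $\mu$-generic points when $K$ is not metrizable---this is precisely what necessitates the preliminary reduction via \cref{metrizablefact}. The remaining ingredients, namely that factors of distal systems are distal and that every point of a distal system is almost periodic, are standard facts of topological dynamics.
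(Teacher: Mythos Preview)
Your proposal is correct and follows essentially the same approach as the paper: reduce to the metrizable case via \cref{metrizablefact}, use Birkhoff's ergodic theorem there to obtain a point with dense orbit, and conclude minimality from the structure of distal systems (the paper invokes the decomposition of a distal system into disjoint minimal sets, which is equivalent to your statement that every point is almost periodic). Your write-up is in fact more explicit about why the hypotheses descend to the metrizable factors, which the paper only mentions parenthetically.
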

\begin{proof}
	Assume first that $K$ is metrizable. Then the existence of a fully supported 
	ergodic measure guarantees the existence of a point $x \in K$ with dense orbit 
	$\{\varphi^n(x)\mid n \in \Z\}$, use Poincar\'{e} recurrence or Birkhoff's ergodic
	theorem (see \cite[Proposition 4.1.13]{KaHa1995}). But 
	then $(K;\varphi)$ is already minimal since a distal system decomposes into a 
	disjoint union of minimal systems, see \cite[Corollary 7]{Ausl1988}.

  If $K$ is not metrizable, we use \cref{metrizablefact} to write $(K;\varphi)$ 
  as a projective limit of metrizable factors $(K_i;\varphi_i)$ for $i \in I$.
  Since $(K_i;\varphi_i)$ is distal and admits a fully supported ergodic measure 
  (recall that the pushforward of an ergodic measure is again ergodic), we obtain 
  that $(K_i;\varphi)$ is minimal for every $i \in I$. Using that a projective 
  limit of minimal systems is minimal (see \cref{remproj}) we obtain that 
  $(K;\varphi)$ is itself minimal.
\end{proof}

\begin{proof}[Proof of \cref{mainthmdistal}]
  For an ergodic distal measure-preserving system $(\uX; T)$ take an ordinal $\eta_0$ and an inductive system 
	\begin{align*}
  	(((X_\eta;T_\eta))_{\eta \leq \eta_0}, (J_\eta^\sigma)_{\eta \leq \sigma})
	\end{align*}
  as in \cref{defmeasurablydistal}. Moreover, we write 
  $J_\eta \colon (\uX_\eta;T_\eta) \rightarrow (\uX;T)$ for the corresponding 
  extensions for every $\eta \leq \eta_0$. We now recursively construct 
	\begin{itemize}
		\item a projective system 
		$((K_\eta;\varphi_\eta))_{\eta \leq \eta_0}, (q_\eta^\sigma)_{\eta \leq \sigma})$,
		\item ergodic measures $\mu_{\eta} \in \mathrm{P}_{\varphi_\eta}(K_\eta)$ 
		for every $\eta \leq \eta_0$, and
		\item isomorphisms $\Phi_\eta \colon (K_\eta,\mu_\eta;T_{\varphi_\eta}) 
		\rightarrow (\uX_\eta;T_\eta)$ for every $\eta \leq \eta_0$,
	\end{itemize}
	such that $(q_\eta^\sigma,\mu_\sigma;\Phi_\eta,\Phi_\sigma)$ is a topological 
	model for $J_\eta^\sigma$ for all $\eta \leq \sigma \leq \eta_0$ and such that 
	$((K_\eta;\varphi_\eta))_{\eta \leq \eta_0}, (q_\eta^\sigma)_{\eta \leq \sigma})$ 
	is a projective system of minimal distal systems satisfying all the properties of 
	\cref{furstenberg} (b). From this the claim follows.

	Let $(K_1;\varphi_1)$ be a trivial system $(\{\mathrm{pt}\};\mathrm{id})$, 
	$\mu_1$ the unique probability measure on $K_1$ and 
	$\Phi_1 \colon (K_1,\mu_1;T_{\varphi_1}) \rightarrow (\uX_1;T_1)$ the 
	identity operator. Now assume that $\eta \leq \eta_0$ is an ordinal 
	and suppose we have already constructed $(K_\gamma;\varphi_\gamma)$ for every 
	$\gamma < \eta$, $q_\gamma^\sigma$ 
	for $\gamma \leq \sigma< \eta$, $\mu_{\gamma}$ for $\gamma < \eta$ and 
	$\Phi_\gamma$ for $\gamma < \eta$. We have to 
	consider two cases.
	\begin{enumerate}[(i)]		
  	\item Assume that $\eta$ is a successor ordinal, i.e., $\eta = \gamma +1$ 
  	for an ordinal $\gamma$. Since $(K_\gamma,\mu_{\gamma};T_{\varphi_{\gamma}})$ 
  	is isomorphic to $(X_\gamma;T_{\gamma})$ via $\Phi_\gamma$, we can apply 
  	\cref{mainthm} to find 
  	\begin{itemize}
  		\item a pseudoisometric extension 
  		$q_{\gamma}^{\eta} \colon (K_{\eta};\varphi_{\gamma}) \rightarrow 
  		(K_\gamma;\varphi_{\gamma})$,
	  	\item a fully supported ergodic measure 
	  	$\mu_{\eta} \in \mathrm{P}_{\varphi_{\eta }}(K_{\eta})$ with 
	  	$(q_{\gamma}^{\eta})_*\mu_{\eta} = \mu_{\gamma}$, and
		  \item an isomorphism $\Phi_{\eta} \colon 
		  (K_{\eta},\mu_{\eta};T_{\varphi_{\eta}}) \rightarrow (X_{\eta};T_{\eta})$,
	  \end{itemize}
	  such that $(q_{\gamma}^{\eta},\mu_{\eta};\Phi_{\gamma},\Phi_{\eta})$ is a 
	  topological model for $J_{\gamma}^\eta$. Since $(K_\gamma;\varphi_{\gamma})$ 
	  is distal and $q_{\gamma}^{\eta}$ is pseudoisometric, the system 
	  $(K_{\eta};\varphi_{\eta })$ is also distal. Moreover, 
	  $(K_{\eta};\varphi_{\eta})$ is minimal by \cref{minimaldistal}. We set 
	  $q_\sigma^{\eta} \coloneqq q_\sigma^\gamma\circ q_{\gamma}^{\eta} $ for 
	  every $\sigma < \gamma$.
	  \item If $\eta \leq \eta_0$ is a limit ordinal, we let 
	  $(K_{\eta};\varphi_{\eta})$ together with maps 
	  $q_\gamma^\eta \colon (K_\eta;\varphi_\eta) \rightarrow (K_\gamma;\varphi_\gamma)$ 
	  for $\gamma < \eta$ be a projective limit of the projective system 
	  $((K_\gamma;\varphi_\gamma))_{\gamma < \eta}, 
	  (q_\gamma^\sigma)_{\gamma \leq \sigma})$. Moreover, let $\mu_{\eta}$ 
	  be the ergodic measure on $(K_{\eta};\varphi_{\eta})$ induced by the net 
	  $(\mu_\gamma)_{\gamma < \eta}$ (cf. \cite[Exercise 10.13]{EFHN2015}), 
	  i.e., $\mu_{\eta} \in \mathrm{C}(K_\eta)'$ is uniquely determined by 
	  the identity $(q_\gamma^\eta)_{*}\mu_\eta = \mu_{\gamma}$ for every 
	  $\gamma < \eta$. By setting 
		\begin{align*}
			\Phi_{\eta}(T_{q_\gamma^\eta}f) \coloneqq J_\gamma^\eta \Phi_\gamma f
		\end{align*}
		for every $f \in \mathrm{C}(K_{\gamma})$ and $\gamma < \eta$ we obtain 
		a (well-defined) map 
		\begin{align*}
			\Phi_{\eta} \colon 
			\bigcup_{\gamma < \eta} T_{q_\gamma^\eta}(\mathrm{C}(K_{\gamma})) 
			\subset \mathrm{C}(K_\eta) \rightarrow \mathrm{L}^1(\uX_\eta)
		\end{align*}
		which extends to an isometric isomorphism 
		$\Phi_{\eta} \colon \mathrm{L}^1(K_\eta,\mu_\eta) 
		\rightarrow \mathrm{L}^1(\uX_\eta)$ intertwining the dynamics.  
	\end{enumerate}
	It is clear from the construction that 
	$(q_\eta^\sigma,\mu_\sigma;\Phi_\eta,\Phi_\sigma)$ is a topological model 
	for $J_\eta^\sigma$ for all $\eta \leq \sigma \leq \eta_0$.
	
	Finally, if $\uX$ is separable, then we can choose metric models in (i) 
	(see \cref{mainthm}). Moreover, in (ii) we can find a subsequence 
	$(((X_{\gamma_n};T_{\gamma_n}))_{n \in \N}, (J_{\gamma_{n}}^{\gamma_k})_{n \leq k})$ 
	of the projective system 
	$(((X_\gamma;T_\gamma))_{\gamma \leq \eta}, (J_\gamma^\sigma)_{\gamma \leq \sigma})$ 
	such that $(\uX_\eta;T_\eta)$ is still the inductive limit of that subsequence 
	(this is an easy consequence of the characterization (iii) of inductive limits 
	in \cite[Theorem 13.35]{EFHN2015}). By considering the now metrizable projective limit of
	$((K_{\gamma_n};\varphi_{\gamma_n}))_{n}, (q_{\gamma_n}^{\gamma_k})_{n \leq k})$ in 
	(ii) and then proceeding as before, we also obtain metrizable models in (ii).
\end{proof}
\begin{remark}\label{remcat}
	Our approach to the theorem of Lindenstrauss unveils a connection between topological and measure-preserving distal systems on a categorical level. Inspecting the definition of the canonical minimal distal model $\mathrm{Mod}(\uX;T) \coloneqq (K;\varphi)$ of an ergodic distal measure-preserving system $(\uX;T)$ in the proof of \cref{mainthmdistal}, shows that the assigment $(\uX;T) \mapsto \mathrm{Mod}(\uX;T)$ is actually functorial: Every extension $J \colon (\uY;S) \rightarrow (\uX;T)$ of ergodic distal measure-preserving systems induces an extension $\mathrm{Mod}(J) \colon \mathrm{Mod}(\uX;T) \rightarrow \mathrm{Mod}(\uY;S)$ between the corresponding canonical topological models. In this way, we obtain a (contravariant) functor $\mathrm{Mod}$ from the category of ergodic distal measure-preserving systems to the category of minimal distal topological dynamical systems. It is noteworthy however, that, even though we can also construct distal ergodic measure-preserving system from distal minimal systems (by simply choosing an ergodic invariant probability measure), the functor $\mathrm{Mod}$ does not define an equivalence between the two categories. In fact, if $(\uX;T)$ is an ergodic distal measure-preserving system and $(K;\varphi)$ its canonical model, then every eigenfunction of $T$ corresponds to a continuous eigenfunction of $T_\varphi$. With this observation one can readily show that a minimal distal system $(K;\varphi)$ possessing
	\begin{enumerate}[(i)]
		\item a unique invariant Borel probability measure $\mu$, and
		\item an eigenfunction $f \in \mathrm{L}^\infty(K,\mu)\setminus \mathrm{C}(K)$ with respect to $T_\varphi$
	\end{enumerate}
cannot be isomorphic to any canonical model $\mathrm{Mod}(\uX;T)$ of an ergodic distal measure-preserving system $(\uX;T)$. An example of Parry (see \cite[Section 3]{Parr1974}) demonstrates that such systems indeed exist and hence $\mathrm{Mod}$ does not define a categorical equivalence.
\end{remark}

\begin{remark}
	 
		In his article \cite{Lindenstrauss1999}, Lindenstrauss also discusses the question under which conditions an ergodic distal measure-preserving system has a distal model which is strictly ergodic (i.e., minimal with a unique invariant Borel probability measure), see also \cite{GutmanLian2019}. It is therefore an interesting problem to determine in which cases the canonical model constructed in this article is strictly ergodic.
	 
\end{remark}
\parindent 0pt
\parskip 0.5\baselineskip
\setlength{\footskip}{4ex}
\bibliographystyle{alpha}
\bibliography{./bib/bibliography} 
\footnotesize
\end{document}